\pdfoutput=1 

\documentclass[reqno,12pt]{amsart}

\usepackage[margin=1.3in,letterpaper,portrait]{geometry}

\usepackage[dvipsnames]{xcolor}
\usepackage{amssymb,amsmath,amsthm,amsbsy,algpseudocode,ragged2e,graphicx,framed,bm,soul,tabularx,multicol,mathtools,fancyhdr,setspace,tikz,listings, microtype,scrextend, float, chngcntr,  
etoolbox, comment,
    marvosym, 
}

\usepackage[colorlinks=true, pdfstartview=FitV, linkcolor=red!60!gray, citecolor=blue,
filecolor=violet, urlcolor=violet]{hyperref}

\usepackage{tikz-cd}
\usetikzlibrary{calc}

\usepackage{mathdots} 

\usepackage{ytableau}
\usepackage[vcentermath,enableskew]{youngtab}

\theoremstyle{definition} 
\newtheorem{theorem}{Theorem}[section]
\newtheorem{lemma}[theorem]{Lemma}

\newtheorem{proposition}[theorem]{Proposition}

\newtheorem{definition}[theorem]{Definition}
\newtheorem{example}[theorem]{Example}
\newtheorem{remark}[theorem]{Remark}
\numberwithin{equation}{section}
\counterwithout{figure}{section}

\definecolor{dblackcolor}{rgb}{0.0,0.0,0.0}
\definecolor{dbluecolor}{rgb}{0.01,0.02,0.7}
\definecolor{dgreencolor}{rgb}{0.2,0.4,0.0}
\definecolor{dgraycolor}{rgb}{0.30,0.3,0.30}

\definecolor{forGreen}{RGB}{0,120,0}

\definecolor{DarkOrange}{rgb}{0.82, 0.41, 0.12}

\title[Type $B$ $c$-Birkhoff polytopes]{Type $B$ $c$-Birkhoff polytopes\\ are order polytopes}

\author
{
Esther Banaian} 
\address{Institute for Mathematics, Paderborn University, Paderborn (Germany)}

\author{
Sunita Chepuri}
\address{Department of Mathematics and Computer Science, University of Puget Sound, Tacoma, WA (USA)}

\author{
Emily Gunawan}
\address{Department of Mathematics and Statistics, University of Massachusetts Lowell, Lowell, MA (USA)}

\author{
Jianping Pan}
\address{School of Mathematical and Statistical Sciences, Arizona State University, Tempe, AZ (USA)}

\keywords{Birkhoff polytope, Order polytope, Heap, 
Cambrian lattice, c-singleton}

\usepackage
[sorting=nty, 
style=alphabetic,
maxbibnames=99,
backend=biber,
isbn=false,
url=false,
eprint=true 
]
{biblatex}

\allowdisplaybreaks 

\usepackage{tikz}
\usepackage{xparse}

\definecolor{forGreen}{RGB}{0,120,0}

\usepackage{circledsteps} 

\usetikzlibrary{shapes.geometric}

\newcommand{\rw}[1]{\left[{#1}\right]} 
\newcommand{\Coxeterlength}{\ell}

\DeclareMathOperator{\sort}{{\sf sort}_c}

\DeclareMathOperator{\MapPermutation}{Perm}

\newcommand{\HeapLEQ}{\preccurlyeq}
\newcommand{\HeapLessThan}{\prec}

\newcommand{\bir}{{\sf Birk}}
\newcommand{\aff}{{\sf Aff}}
\newcommand{\ord}{\mathcal{O}}
\newcommand{\heap}{{\sf Heap}}

\newcommand{\upperbarletter}{u}
\newcommand{\lowerbarletter}{d}

\newcommand{\Heapwnot}{\heap(\sort(w_0))}

\newcommand\LatticeC{{\mathcal L}({\text{c-singletons})}}

\newcommand{\X}{\color{red}\mathsf{X}}

\newcommand{\PermMatrix}[1]{X(#1)}

\def\LongEltA{{w_0^A}}
\def\LongEltB{{w_0^B}}
\def\genA{s^A}
\def\genB{s^B}
\def\cA{{c^A}}
\def\cB{{c^B}} 
\def\HHA{H^A}
\def\HHB{H^B}
\def\UnfoldMap{\eta}

\subjclass[2020]{52B20,  
05A05,  
06A07}  	
\begin{document}
\begin{abstract} 
In a previous work, we defined (type $A$) $c$-Birkhoff polytopes and showed that they were unimodularly equivalent to order polytopes of heap posets.  In this note we answer the question: What about type $B$?
\end{abstract}

\maketitle


\section{Introduction}

In this paper we build on work in \cite{cBirkhoffA} developing the connection between order polytopes and Birkhoff subpolytopes associated to reduced words of permutations.

Given a finite poset $H$, the \emph{order polytope} $\ord(H)$ is the convex hull of the indicator vectors of the order ideals of $H$.  The order polytope is a well-studied polytope in $\mathbb{R}^{\vert H \vert}$; its dimension is $\vert H \vert$ and its normalized volume is the number of linear extensions of~$H$ \cite{Sta86}.

The \emph{Birkhoff polytope} is defined as the convex polytope of $n\times n$ doubly stochastic matrices, i.e., matrices with  non-negative entries whose rows and columns all sum to~$1$.  Alternatively it can be defined as the convex hull of all permutation matrices \cite{birkhoff1946three}.  The Birkhoff polytope is notable for its wide array of applications, including in combinatorics~\cite{Igor,BPD03,A05,P13}, representation theory~\cite{BHNP09}, optimization~\cite{M88,BS96,BS03} and statistics~\cite{BRT97,DE06,PSU19}.

In~\cite{DS18}, Davis and Sagan studied the convex hull of $132$ and $312$ avoiding permutation matrices, a subpolytope of the Birkhoff polytope. They proved that the normalized volume of this polytope is the number of longest chains in the type $A_{m-1}$ Tamari lattice.
Inspired by their work and the fact that the $132$ and $312$ avoiding permutations are exactly the $c$-singletons for the Coxeter element $c=(12\dots m)$ written in cycle notation, in~\cite{cBirkhoffA} we defined a (type $A$) Birkhoff subpolytope $\bir(c)$ to be the convex hull of permutation matrices corresponding to $c$-singletons for any Coxeter element $c$. 
We then proved that $\bir(c)$ is integrally equivalent to the order polytope of the heap of the longest $c$-sorting word of $S_m$. 
A consequence of this result is that the normalized volume of  $\bir(c)$ is the number of longest (length~$\binom{m}{2}$) chains in Reading's (type $A_{m-1}$) $c$-Cambrian lattice, an important generalization of the Tamari lattice~\cite{reading2006cambrian}.

In the present paper, we turn our attention to the Coxeter group $B_{n}$ which is realized as the group of permutations $v$ on $\pm[n]=\{-n,\dots,-1,1,\dots,
n\}$ satisfying $v(-k)=-v(k)$; such permutations are called \emph{signed permutations} on $[n]$.  We can naturally embed $B_n$ into $A_{2n-1}$ by identifying each $v\in B_n$ with a permutation $\UnfoldMap(v)\in S_{2n}=A_{2n-1}$. For more details, see Section \ref{sec:embedding}.
For each Coxeter element $\cB$ in $B_n$, we use this embedding to define a (type $B$) Birkhoff subpolytope $\bir(\cB)$ to be 
the convex hull of $$\{X(\UnfoldMap(v)) \mid v \text{ is a }\cB\text{-singleton in } B_n\}$$
where $\PermMatrix{\UnfoldMap(v)}$ denotes the permutation matrix of $\UnfoldMap(v)$.

\vspace{1\baselineskip}
\noindent \textbf{Main Result} (Theorem~\ref{thm:Main})\textbf{.}
$\bir(\cB)$ is integrally equivalent to the order polytope of the heap of the longest $\cB$-sorting word of $B_n$. 
\vspace{1\baselineskip}

As in the type $A$ work~\cite{cBirkhoffA}, a consequence of this result is that the normalized volume of $\bir(\cB)$ is the number of longest (length $n^2$) chains in the (type $B_n$) $\cB$-Cambrian lattice.


\section{Background and notation}

\newcommand{\posetelt}[1]{{#1}}
\newcommand{\posetLABEL}[1]{{#1}}
\newcommand{\posetLABELwithS}[1]{s_{#1}}

A \emph{Coxeter system} $(W,S)$ is a \emph{Coxeter group} $W$ together with a set $S$ of generators for $W$ called \emph{simple reflections} subject to the relations $s^2=e$ for all $s\in S$ and the braid relations $(st)^{m(s,t)}=e$ for all $s,t$ such that $m(s,t) < \infty$. 
For $s,t\in S$ where $m(s,t)=2$, we have $st=ts$, which we call a \emph{commutation relation}.
An application of a commutation relation to a product of simple reflections is called a \emph{commutation move}. 
A \emph{Coxeter element} $c$ in $W$ is a product of all simple reflections in any order, where each reflection appears exactly once.

Given $w\in W$, the minimum number of simple reflections among all  expressions for $w$ as a product of simple reflections is called the \emph{length} of $w$, and is denoted by $\Coxeterlength(w)$. 
A \emph{reduced decomposition} of $w$ is an expression $w=s_{i_1} \cdots s_{i_{\ell(w)}}$ realizing $\Coxeterlength(w)$.

\subsection{Type $A_n$ and $B_n$ permutations} 

This paper focuses on the Coxeter groups of type $A$ and $B$, which we denote by $A_n$ and $B_n$. We now review the combinatorial realizations of these groups in terms of permutations and signed permutations. For more details, see for example~\cite{BB05}.
The simple reflections in $A_n$ are denoted $\genA_1,\dots,\genA_n$; and the simple reflections in $B_n$ are denoted  $\genB_0,\genB_1,\ldots,\genB_{n-1}$. 
We sometimes write $s_k$ when $W$ is understood.

Let $A_n$ denote the symmetric group on $n+1$ elements. 
We can represent a permutation $w\in A_n$ in \emph{one-line notation} as $w=w(1)w(2)\dots w(n+1)$. 
The simple reflections for $A_n$ are \emph{adjacent transpositions} $\genA_k=(k \quad k+1)$ for $k\in [n]$ where $[n] = \{1,2,\dots,n\}$. 
Distinct simple reflections satisfy the commutation relation $\genA_i \genA_j=\genA_j \genA_i$ if and only if $|i-j| > 1$. 
The longest element of $A_n$ is the permutation $\LongEltA=(n+1)n \dots 321$ and $\ell(\LongEltA)=\binom{n+1}{2}$.

 Let $B_n$ be the group of signed permutations on \[\pm [n]=\{ -n,\ldots ,-2,-1,1,2,\ldots,n\}\] which satisfies $w(-k)=-w(k)$ for all $k \in [n]$. We write these permutations 
in \emph{full one-line notation} as $
w(-n) w(-n+1) \dots w(-1) w(1) w(2) \dots w(n)
$
 or 
in \emph{window notation} as 
$w(1) w(2) \dots w(n).$ 
The simple reflections for $B_n$ are $\genB_0=({-1} \quad ~ 1)$ and $\genB_k = ({-k-1} \quad {-k})(k  \quad {k+1})$ for $k\in[n-1]$.
As in $A_n$, distinct simple reflections in $B_n$ satisfy the commutation relation $\genB_i \genB_j=\genB_j \genB_i$ if and only if $|i-j| > 1$. 
The longest element of $B_n$ is the signed permutation $\LongEltB = {(-1)}{(-2)} \dots {(-n)}$ in window notation and $\ell(\LongEltB)=n^2$.

To simplify notation, we refer to a reduced decomposition $s_{i_1} \cdots s_{i_{\ell(w)}}$ of $w$ in $A_n$ or $B_n$ via its \emph{reduced word} $\rw{i_1 \cdots i_{\ell(w)}}$. 
Given a reduced word $\rw{u}$, the equivalence class consisting of all words that can be obtained from $\rw{u}$ by a sequence of commutation moves is called the \emph{commutation class} of~$\rw{u}$.

\subsection{Heaps}
We begin by reviewing the classical theory of heaps; see \cite{Viennot86lecturenotes} and \cite[Solutions to Exercise 3.123(ab)]{Sta12}. For applications of heaps in voting theory, see \cite{GR08,LL20,RT25}.  We follow the exposition given in~\cite{Ste96}, where the theory of heaps was used to study fully commutative elements of a Coxeter group.

\begin{definition}
\label{defn:heap of a reduced word}
Let $W$ be the Coxeter group $A_n$ or $B_n$. 
Given a reduced word $\rw{a}=\rw{a_1 \cdots a_\ell}$ of an element in $W$, consider the
partial order $\HeapLEQ$ on the set $\{ 1, \dots, \ell\}$ obtained via the transitive closure of the relations 
\[x
\HeapLessThan
y\]
for $x < y$ such that $|a_x-a_y| \leq 1$. 
For each $1 \leq x \leq \ell$, the \emph{label} of the poset element $x$ is~$a_x$. This labeled poset is called the \emph{heap} for $\rw{a}$, denoted $\heap(\rw{a})$.

The Hasse diagram for this poset with elements $\{1,\ldots, \ell\}$ replaced by their labels is called the \emph{heap diagram} for~$\rw{a}$. 
In our figures, we represent each label $j$ by the simple reflection $s_j$ for clarity. 
\end{definition}

\def\myscale{.69}
\def\MyLocalGen{s}
\begin{figure}[htb!] 
\begin{center}
\begin{tikzpicture}[scale=\myscale]

\node (B0) at (0,0) {$3$};
\node (B1) at (1,1) {$4$};
\node (B-1) at (-1,1) {$5$};
\node (B2) at (2,2) {$6$};
\node (B-2) at (-2,2) {$7$};
\node (B3) at (3,1) {$1$};
\node (B-3) at (-3,1) {$2$};

\node (B0L2) at (0,0+2) {$10$};
\node (B1L2) at (1,1+2) {$11$};
\node (B-1L2) at (-1,1+2) {$12$};
\node (B2L2) at (2,2+2) {$13$};
\node (B-2L2) at (-2,2+2) {$14$};
\node (B3L2) at (3,1+2) {$8$};
\node (B-3L2) at (-3,1+2) {$9$};

\node (B0L3) at (0,0+2+2) {$17$};
\node (B1L3) at (1,1+2+2) {$18$};
\node (B-1L3) at (-1,1+2+2) {$19$};
\node (B2L3) at (2,2+2+2) {$20$};
\node (B-2L3) at (-2,2+2+2) {$21$};
\node (B3L3) at (3,1+2+2) {$15$};
\node (B-3L3) at (-3,1+2+2) {$16$};

\node (B0L4) at (0,0+2+2+2) {$24$};
\node (B1L4) at (1,1+2+2+2) {$25$};
\node (B-1L4) at (-1,1+2+2+2) {$26$};
\node (B2L4) at (2,2+2+2+2) {$27$};
\node (B-2L4) at (-2,2+2+2+2) {$28$};
\node (B3L4) at (3,1+2+2+2) {$22$};
\node (B-3L4) at (-3,1+2+2+2) {$23$};

\begin{scope}[black, thick, shorten <=-3pt, shorten >=-3pt]
\draw (B0) -- (B1); \draw (B1) -- (B2); \draw (B2) -- (B3);

\draw (B0) -- (B-1); \draw (B-1) -- (B-2); \draw (B-2) -- (B-3);

\draw (B0L2) -- (B1L2); \draw (B1L2) -- (B2L2); \draw (B2L2) -- (B3L2);

\draw (B0L2) -- (B-1L2); \draw (B-1L2) -- (B-2L2); \draw (B-2L2) -- (B-3L2);

\draw (B0L3) -- (B1L3); \draw (B1L3) -- (B2L3); \draw (B2L3)-- (B3L3);

\draw (B0L3) -- (B-1L3); \draw (B-1L3) -- (B-2L3); \draw (B-2L3) -- (B-3L3);

\draw (B0L4) -- (B1L4); \draw (B1L4) -- (B2L4); \draw (B2L4) -- (B3L4);

\draw (B0L4) -- (B-1L4); \draw (B-1L4) -- (B-2L4); \draw (B-2L4) -- (B-3L4);

\draw (B2) -- (B3L2); \draw (B2L2) -- (B3L3); \draw (B2L3) -- (B3L4);

\draw (B-2) -- (B-3L2); \draw (B-2L2) -- (B-3L3); \draw (B-2L3) -- (B-3L4);

\draw (B2) -- (B1L2); \draw (B2L2) -- (B1L3);  \draw (B2L3) -- (B1L4); 

\draw (B-2) -- (B-1L2); \draw (B-2L2) -- (B-1L3);  \draw (B-2L3) -- (B-1L4); 

\draw (B1) -- (B0L2); \draw (B1L2) -- (B0L3);  \draw (B1L3) -- (B0L4); 

\draw (B-1) -- (B0L2); \draw (B-1L2) -- (B0L3);  \draw (B-1L3) -- (B0L4); 
\end{scope}
\end{tikzpicture}
 \qquad
\def\MyLocalGen{s}
\begin{tikzpicture}[scale=\myscale]
\node (B0) at (0,0) {$\MyLocalGen_4$};
\node (B1) at (1,1) {$\MyLocalGen_5$};
\node (B-1) at (-1,1) {$\MyLocalGen_3$};
\node (B2) at (2,2) {$\MyLocalGen_6$};
\node (B-2) at (-2,2) {$\MyLocalGen_2$};
\node (B3) at (3,1) {$\MyLocalGen_7$};
\node (B-3) at (-3,1) {$\MyLocalGen_1$};

\node (B0L2) at (0,0+2) {$\MyLocalGen_4$};
\node (B1L2) at (1,1+2) {$\MyLocalGen_5$};
\node (B-1L2) at (-1,1+2) {$\MyLocalGen_3$};
\node (B2L2) at (2,2+2) {$\MyLocalGen_6$};
\node (B-2L2) at (-2,2+2) {$\MyLocalGen_2$};
\node (B3L2) at (3,1+2) {$\MyLocalGen_7$};
\node (B-3L2) at (-3,1+2) {$\MyLocalGen_1$};

\node (B0L3) at (0,0+2+2) {$\MyLocalGen_4$};
\node (B1L3) at (1,1+2+2) {$\MyLocalGen_5$};
\node (B-1L3) at (-1,1+2+2) {$\MyLocalGen_3$};
\node (B2L3) at (2,2+2+2) {$\MyLocalGen_6$};
\node (B-2L3) at (-2,2+2+2) {$\MyLocalGen_2$};
\node (B3L3) at (3,1+2+2) {$\MyLocalGen_7$};
\node (B-3L3) at (-3,1+2+2) {$\MyLocalGen_1$};

\node (B0L4) at (0,0+2+2+2) {$\MyLocalGen_4$};
\node (B1L4) at (1,1+2+2+2) {$\MyLocalGen_5$};
\node (B-1L4) at (-1,1+2+2+2) {$\MyLocalGen_3$};
\node (B2L4) at (2,2+2+2+2) {$\MyLocalGen_2$};
\node (B-2L4) at (-2,2+2+2+2) {$\MyLocalGen_2$};
\node (B3L4) at (3,1+2+2+2) {$\MyLocalGen_3$};
\node (B-3L4) at (-3,1+2+2+2) {$\MyLocalGen_1$};

\begin{scope}[black, thick, shorten <=-4pt, shorten >=-4pt]
\draw (B0) -- (B1); \draw (B1) -- (B2); \draw (B2) -- (B3);

\draw (B0) -- (B-1); \draw (B-1) -- (B-2); \draw (B-2) -- (B-3);

\draw (B0L2) -- (B1L2); \draw (B1L2) -- (B2L2);\draw (B2L2) -- (B3L2);

\draw (B0L2) -- (B-1L2); \draw (B-1L2) -- (B-2L2); \draw (B-2L2) -- (B-3L2);

\draw (B0L3) -- (B1L3); \draw (B1L3) -- (B2L3); \draw (B2L3) -- (B3L3);
\draw (B0L3) -- (B-1L3);  \draw (B-1L3) -- (B-2L3); \draw (B-2L3)-- (B-3L3);

\draw (B0L4) -- (B1L4);\draw (B1L4) -- (B2L4); \draw (B2L4) -- (B3L4);

\draw (B0L4) -- (B-1L4);\draw (B-1L4) -- (B-2L4); \draw (B-2L4) -- (B-3L4);

\draw (B2) -- (B3L2); \draw (B2L2) -- (B3L3); \draw (B2L3) -- (B3L4);

\draw (B-2) -- (B-3L2); \draw (B-2L2) -- (B-3L3); \draw (B-2L3) -- (B-3L4);

\draw (B2) -- (B1L2); \draw (B2L2) -- (B1L3);  \draw (B2L3) -- (B1L4); 

\draw (B-2) -- (B-1L2); \draw (B-2L2) -- (B-1L3);  \draw (B-2L3) -- (B-1L4); 

\draw (B1) -- (B0L2); \draw (B1L2) -- (B0L3);  \draw (B1L3) -- (B0L4); 

\draw (B-1) -- (B0L2); \draw (B-1L2) -- (B0L3);  \draw (B-1L3) -- (B0L4); 
\end{scope}
\end{tikzpicture}
\bigskip

\begin{tikzpicture}[scale=\myscale]
\node (B0) at (0,0) {$2$};
\node (B1) at (1,1) {$3$};
\node (B2) at (2,2) {$4$};
\node (B3) at (3,1) {$1$};

\node (B0L2) at (0,0+2) {$6$};
\node (B1L2) at (1,1+2) {$7$};
\node (B2L2) at (2,2+2) {$8$};
\node (B3L2) at (3,1+2) {$5$};

\node (B0L3) at (0,0+2+2) {$10$};
\node (B1L3) at (1,1+2+2) {$11$};
\node (B2L3) at (2,2+2+2) {$12$};
\node (B3L3) at (3,1+2+2) {$9$};

\node (B0L4) at (0,0+2+2+2) {$14$};
\node (B1L4) at (1,1+2+2+2) {$15$};
\node (B2L4) at (2,2+2+2+2) {$16$};
\node (B3L4) at (3,1+2+2+2) {$13$};

\begin{scope}[black, thick, shorten <=-3pt, shorten >=-3pt]
\draw (B0) -- (B1); \draw (B1)-- (B2); \draw (B2)-- (B3);

\draw (B0L2) -- (B1L2); \draw (B1L2)-- (B2L2); \draw (B2L2)-- (B3L2);

\draw (B0L3) -- (B1L3); \draw (B1L3)-- (B2L3); \draw (B2L3)-- (B3L3);

\draw (B0L4) -- (B1L4); \draw (B1L4)-- (B2L4); \draw (B2L4)-- (B3L4);

\draw (B2) -- (B3L2); \draw (B2L2) -- (B3L3); \draw (B2L3) -- (B3L4);

\draw (B2) -- (B1L2); \draw (B2L2) -- (B1L3);  \draw (B2L3) -- (B1L4); 

\draw (B1) -- (B0L2); \draw (B1L2) -- (B0L3);  \draw (B1L3) -- (B0L4); 
\end{scope}
\end{tikzpicture}
 \qquad\qquad\qquad
\def\MyLocalGen{s}
\begin{tikzpicture}[scale=\myscale]
\node (B0) at (0,0) {$\MyLocalGen_0$};
\node (B1) at (1,1) {$\MyLocalGen_1$}; 
\node (B2) at (2,2) {$\MyLocalGen_2$}; 
\node (B3) at (3,1) {$\MyLocalGen_3$}; 

\node (B0L2) at (0,0+2) {$\MyLocalGen_0$};
\node (B1L2) at (1,1+2) {$\MyLocalGen_1$}; 
\node (B2L2) at (2,2+2) {$\MyLocalGen_2$}; 
\node (B3L2) at (3,1+2) {$\MyLocalGen_3$}; 

\node (B0L3) at (0,0+2+2) {$\MyLocalGen_0$};
\node (B1L3) at (1,1+2+2) {$\MyLocalGen_1$}; 
\node (B2L3) at (2,2+2+2) {$\MyLocalGen_2$};
\node (B3L3) at (3,1+2+2) {$\MyLocalGen_3$};

\node (B0L4) at (0,0+2+2+2) {$\MyLocalGen_0$};
\node (B1L4) at (1,1+2+2+2) {$\MyLocalGen_1$}; 
\node (B2L4) at (2,2+2+2+2) {$\MyLocalGen_2$}; 
\node (B3L4) at (3,1+2+2+2) {$\MyLocalGen_3$};

\begin{scope}[black, thick, shorten <=-4pt, shorten >=-4pt]
\draw (B0) -- (B1); \draw (B1) -- (B2); \draw (B2) -- (B3);

\draw (B0L2) -- (B1L2); \draw (B1L2)-- (B2L2); \draw (B2L2)-- (B3L2);

\draw (B0L3) -- (B1L3); \draw (B1L3)-- (B2L3); \draw (B2L3)-- (B3L3);

\draw (B0L4) -- (B1L4); \draw (B1L4) -- (B2L4); \draw (B2L4) -- (B3L4);

\draw (B2) -- (B3L2); \draw (B2L2) -- (B3L3); \draw (B2L3) -- (B3L4);

\draw (B2) -- (B1L2); \draw (B2L2) -- (B1L3);  \draw (B2L3) -- (B1L4);

\draw (B1) -- (B0L2); \draw (B1L2) -- (B0L3);  \draw (B1L3) -- (B0L4); 
\end{scope}
\end{tikzpicture}
\end{center}
\caption{
Top (left):
Hasse diagram of the underlying poset of $\heap(\rw{aaaa})$ for $\rw{a}=\rw{71 4 53 62}$ in $A_7$;
top (right): heap diagram of $\heap(\rw{aaaa})$. 
Bottom (left): 
Hasse diagram of the underlying poset of $\heap(\rw{bbbb})$ for $\rw{b}=\rw{3012}$ in $B_4$; bottom (right): 
heap diagram of $\heap(\rw{bbbb})$}
\label{fig:longest element heap A7 and B4}
\end{figure}

\begin{example}
\label{ex:running example type B}
\begin{enumerate}
\item 
The top two pictures in Figure~\ref{fig:longest element heap A7 and B4} show the Hasse diagram and heap diagram of $\heap(\rw{aaaa})$ for $\rw{a}=\rw{71 4 53 62}$. The elements of the underlying poset are $\{1,2,\dots, 28 \}$, and the possible labels are $\{1,2,\dots,7\}$.
\item 
The bottom two pictures in Figure~\ref{fig:longest element heap A7 and B4} show the Hasse diagram and heap diagram of  $\heap(\rw{bbbb})$ for $\rw{b}=\rw{3012}$. The elements of the underlying poset are $\{1,2,\dots, 16 \}$, and the possible labels are $\{0,1,2,3\}$.
\end{enumerate}
\end{example}


We can understand the commutation class of $\rw{a}$ by looking at linear extensions of $\heap(\rw{a})$.

\begin{definition}[linear extension]
Let $(H,\HeapLEQ_H)$ be a poset on $\ell$ elements. A \emph{linear extension} of $H$ is a bijection $\pi:\{1,\dots, \ell\} \to H$
that is consistent with the structure of the poset, that is,  
\begin{center}
$\pi(x) \HeapLessThan_H \pi(y)$ implies $x < y$.
 \end{center}
\end{definition}

If $\rw{a}=\rw{a_1 \cdots a_\ell}$ is a reduced word, then the poset $\heap(\rw{a})$ is a poset on $\{1,\dots,\ell\}$, and so we can think of a linear extension of 
$\heap(\rw{a})$ as a permutation
\[\pi = \begin{pmatrix}
1 & 2 & \dots & \ell
\\
\pi(1) & \pi(2) & \dots &\pi(\ell)
\end{pmatrix}.\]  This leads us to the next definition.

\begin{definition}[Labeled linear extension]
For a reduced word $\rw{a}=\rw{a_1 \cdots a_\ell}$, a \emph{labeled linear extension} of 
$\heap(\rw{a})$ is a word $\rw{a_{\pi(1)} \cdots a_{\pi(\ell)}}$ where $\pi$ is a linear extension of $\heap(\rw{a})$.
\end{definition}

\begin{proposition}
{\cite[Proof of Proposition~2.2]{Ste96}}
\label{prop:set of labeled linear extensions is commutativity class Stembridge}
Given a reduced word $\rw{a}$, the set of labeled linear extensions of $\heap(\rw{a})$ is the commutation class of $\rw{a}$.
\end{proposition}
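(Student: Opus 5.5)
We prove the two inclusions separately; the word in the second statement plays no role beyond fixing the labels $a_x$.

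\textbf{Labeled linear extensions lie in the commutation class.} We show that every labeled linear extension $\rw{a_{\pi(1)}\cdots a_{\pi(\ell)}}$ of $\heap(\rw{a})$ is obtained from $\rw{a}$ by commutation moves. Induct on the number of inversions of $\pi$, viewed as a permutation of $\{1,\dots,\ell\}$. If $\pi$ is not the identity, there is an index $k$ with $\pi(k)>\pi(k+1)$. Since $\pi$ is a linear extension, the elements $\pi(k)$ and $\pi(k+1)$ are incomparable in $\heap(\rw{a})$. In particular $\pi(k+1)\not\HeapLessThan\pi(k)$, and because $\pi(k+1)<\pi(k)$ the defining relation gives $|a_{\pi(k)}-a_{\pi(k+1)}|\ge 2$. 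The two letters therefore commute, and swapping positions $k,k+1$ is a commutation move. Composing $\pi$ with the transposition $(k\ k+1)$ yields a linear extension with one fewer inversion. Induction then connects our word to the identity extension, which is $\rw{a}$ itself.

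\textbf{The commutation class consists of labeled linear extensions.} We show that the set of labeled linear extensions is closed under commutation moves; since it contains $\rw{a}$, it then contains the whole commutation class. Let $\pi$ be a linear extension and suppose $|a_{\pi(k)}-a_{\pi(k+1)}|\ge2$. We must check that $\pi'=\pi\circ(k\ k+1)$ is again a linear extension. The only relation that could fail is a comparability between $\pi(k)$ and $\pi(k+1)$, so it suffices to show these two elements are incomparable.

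Suppose, say, $\pi(k)\HeapLessThan\pi(k+1)$. This relation comes from a chain $\pi(k)=x_0\HeapLessThan x_1\HeapLessThan\cdots\HeapLessThan x_r=\pi(k+1)$ of generating relations, each with $x_{i-1}<x_i$ and $|a_{x_{i-1}}-a_{x_i}|\le1$. Since the labels differ by at least $2$, we have $r\ge2$, so $x_1$ lies strictly between $\pi(k)$ and $\pi(k+1)$ in the heap order. A linear extension must then place $x_1$ strictly between positions $k$ and $k+1$, which is impossible. The case $\pi(k+1)\HeapLessThan\pi(k)$ is symmetric.

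Finally, a labeled linear extension determines its word, and the map $\pi\mapsto$ word is compatible with both directions above, so the two sets coincide. The main subtlety is the intermediate-element argument in the second direction; the rest is routine.
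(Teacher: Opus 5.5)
Your proof is correct and follows essentially the standard argument behind the cited proof of Stembridge: in a linear extension of the heap, two adjacent positions hold incomparable elements exactly when their labels commute, and linear extensions are connected by such adjacent swaps (you get this by inducting on inversions toward the identity extension, whose labeled word is $\rw{a}$). The one implicit step is that $\pi(k)\not\HeapLessThan\pi(k+1)$ whenever $\pi(k)>\pi(k+1)$; this holds because the heap order refines the usual order on $\{1,\dots,\ell\}$.
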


\subsection{The heap of the longest $c$-sorting word in  $A_n$ and $B_n$}

The notion of $c$-sorting words  was introduced by Reading in~\cite{reading07-clusters-paper4}. 
Fix a reduced word $\rw{a_1 a_2 \dots a_n}$ for a Coxeter element $c$, and define an infinite word
\[
c^{\infty} \coloneqq a_1 a_2 \dots a_n \mid a_1 a_2 \dots a_n\mid \cdots
\]
The \emph{$c$-sorting word} of $w\in W$ is the lexicographically first (as a sequence of positions in $c^\infty$) subword of $c^\infty$ that is a reduced word for $w$. 
Denote this word by $\sort(w)$. 
If a word $\rw{u}=\rw{u_1 \dots u_\ell}$ is the $c$-sorting word of an element in $W$, we refer to $\rw{u}$ as a \emph{$c$-sorting word}.

In this paper, we are  interested in the heap diagram of $\sort(w_0)$ for $A_n$ and $B_n$. The following is proven in Sections 6.2 and 6.3 of \cite{DefantLi23}, for types $A$ and $B$, respectively. 

\begin{lemma} 
\begin{enumerate}
\item 
The $c$-sorting word for $w_0$ in $A_n$ is  a concatenation of nonempty subwords of $c$,
$
\sort(w_0)= \rw{K_1 \mid K_2  \mid \dots \mid K_p} 
$
where $K_1 \supseteq K_2 \supseteq \cdots \supseteq K_p$ as sets. 
For a construction of the heap diagram of $\sort(w_0)$ in type $A$, see for example~\cite[Algorithm~6.1]{cBirkhoffA}.

\item 
The $c$-sorting word of $w_0$ in $B_n$ is $c^n$. So, to draw the heap of $c^n$, we simply stack $n$ ``layers'' of $\heap(c)$. 
\end{enumerate}
\end{lemma}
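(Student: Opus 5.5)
The two parts are proved separately. Type $B$ follows from a length count together with the centrality of $\LongEltB$; type $A$ follows from Reading's theory of $c$-sortable elements.

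\emph{Type $B$.} Fix a reduced word for $\cB$ and let $\rw{u}$ be the word $c^n$, i.e.\ the first $n\cdot n=n^2$ letters of $c^\infty$. Since $\ell(\LongEltB)=n^2$, any subword of $c^\infty$ that is a reduced word for $\LongEltB$ occupies exactly $n^2$ positions. Among all $n^2$-element sets of positions, $\{1,\dots,n^2\}$ is lexicographically first. So it suffices to show that the product $c^n$ equals $\LongEltB$; this forces the word $c^n$ to be reduced (its length equals $\ell(\LongEltB)$) and to be $\sort(\LongEltB)$.

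To prove $c^n=\LongEltB$, I would use the reflection representation of $B_n$ on $\mathbb{R}^n$. There $\LongEltB=-\mathrm{id}$, since it is the signed permutation $(-1)(-2)\cdots(-n)$. The Coxeter number of $B_n$ is $h=2n$, and by the classical theory of Coxeter elements (Coxeter, Kostant, Steinberg) the eigenvalues of $c$ are $e^{2\pi i m_j/h}$ with exponents $m_j=1,3,\dots,2n-1$. Since $c$ has finite order it is diagonalizable, so $c^n$ is diagonalizable with eigenvalues $e^{\pi i m_j}=-1$. Hence $c^n=-\mathrm{id}=\LongEltB$.

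If one prefers to stay combinatorial, there is an alternative for this step. All Coxeter elements of $B_n$ are conjugate, and conjugation preserves the central element $-\mathrm{id}$, so it suffices to check one Coxeter element. For $c=s_0s_1\cdots s_{n-1}$ one can compute directly in window notation that $c$ acts as a signed $2n$-cycle on $\pm[n]$. Its $n$-th power then sends $k\mapsto -k$ for every $k$, which is $\LongEltB$. I expect the bookkeeping for this identity to be the main (mild) obstacle; the length argument is immediate.

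\emph{Type $A$.} Write the $c$-sorting word of $\LongEltA$ block by block, $\sort(\LongEltA)=\rw{K_1\mid K_2\mid\cdots\mid K_p}$, where $K_i$ is the subword taken from the $i$-th copy of $c$, and discard any trailing empty blocks. By Reading's definition, an element is \emph{$c$-sortable} exactly when these blocks are nested, $K_1\supseteq K_2\supseteq\cdots$. So it remains to show that $\LongEltA$ is $c$-sortable. The $c$-sortable elements are the elements of the $c$-Cambrian lattice, whose top element is $\LongEltA$ \cite{reading2006cambrian,reading07-clusters-paper4}, so $\LongEltA$ is $c$-sortable.

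The blocks $K_1,\dots,K_p$ are then automatically nonempty. Indeed, if some $K_i$ with $i<p$ were empty, nestedness would make every later block empty, contradicting that $K_p$ is the last nonempty block. This gives the claimed structure; the explicit heap is then described by \cite[Algorithm~6.1]{cBirkhoffA}.
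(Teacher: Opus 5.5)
Your proof is correct. The paper gives no argument of its own here: it cites Sections~6.2 and~6.3 of \cite{DefantLi23}. Your route is therefore genuinely different, and self-contained modulo standard facts.

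\textbf{Type $B$.} You use the length count $\ell(\LongEltB)=n^2$, together with the observation that the positions $\{1,\dots,n^2\}$ are lexicographically first. This reduces everything to the group identity $c^n=\LongEltB$. The argument is uniform in $\cB$, and it shows more generally that $\sort(w_0)=c^{h/2}$ in any finite irreducible Coxeter group with $w_0=-1$.

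You can also skip the conjugacy step. By Remark~\ref{rem:Coxeter element in cycle notation}, every $\cB$ is a $2n$-cycle on $\pm[n]$ that commutes with $k\mapsto -k$. The centralizer of a $2n$-cycle in the symmetric group on those $2n$ letters is the cyclic group it generates. Hence the fixed-point-free involution $k\mapsto -k$ must equal $(\cB)^n$.

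\textbf{Type $A$.} You reduce the claim to the $c$-sortability of $\LongEltA$. After that, nestedness is Reading's definition, and nonemptiness follows exactly as you argue.

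Saying that $\LongEltA$ is the top of the Cambrian lattice is really a restatement of that fact, so it helps to add a one-line justification:
\begin{enumerate}
\item each simple reflection is $c$-sortable;
\item $c$-sortable elements (the bottom elements of the Cambrian congruence classes) are closed under joins in the weak order;
\item the join of all simple reflections is $\LongEltA$.
\end{enumerate}
Alternatively, cite Reading's explicit statement that $w_0$ is $c$-sortable.

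\textbf{What each approach buys.} Your route is a short, conceptual proof of exactly what is stated. It does not give the explicit shape of the blocks $K_i$ in type $A$. That explicit description, via \cite[Algorithm~6.1]{cBirkhoffA}, is what the rest of the paper actually uses to draw $\HHA$ and to define the linear extension $\pi_A$.
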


See Figure~\ref{fig:longest element heap A7 and B4} for the heap diagrams of $\sort(w_0)$ for $c=\rw{71 4 53 62}$ in $A_7$ and for $c=\rw{3012}$ in $B_4$.

\subsection{$c$-singleton permutations}
\label{sec:c-singleton}

In \cite{HLT11}, Hohlweg, Lange, and Thomas 
introduced the notion of $c$-singletons. Following the survey~\cite{HohlwegSurvey}, we will adopt the definition that $w \in W$ is a $c$-singleton if and only if some reduced word of $w$ is a prefix of a word in the commutation class of $\sort(w_0)$.
We will also use the following  characterization of $c$-singletons, which follows from Proposition~\ref{prop:set of labeled linear extensions is commutativity class Stembridge}.

\begin{lemma}
\label{lem:c-singleton iff linear extension of order ideal}
An element $w \in W$ is a $c$-singleton if and only if there exists a reduced word~$\rw{u}$ 
of $w$ and an order ideal $I$ of $\Heapwnot$ such that $I=\heap(\rw{u})$. 
\end{lemma}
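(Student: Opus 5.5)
We prove the two directions separately, using the definition of $c$-singleton adopted above together with Proposition~\ref{prop:set of labeled linear extensions is commutativity class Stembridge}. Throughout, write $\rw{v}=\sort(w_0)$, a word of length $N=\ell(w_0)$, and let $H=\Heapwnot$, a poset on $\{1,\dots,N\}$ whose element $x$ carries the label $v_x$.

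($\Rightarrow$) Suppose $w$ is a $c$-singleton. By definition there is a reduced word $\rw{u}=\rw{u_1\cdots u_k}$ of $w$ and a word $\rw{u\,t}$ in the commutation class of $\rw{v}$ having $\rw{u}$ as a prefix. By Proposition~\ref{prop:set of labeled linear extensions is commutativity class Stembridge}, $\rw{u\,t}=\rw{v_{\pi(1)}\cdots v_{\pi(N)}}$ for some linear extension $\pi$ of $H$. Set $I=\{\pi(1),\dots,\pi(k)\}$. This is an order ideal: if $y\in I$, say $y=\pi(j)$ with $j\le k$, and $x\prec y$, then $x=\pi(i)$ with $i<j\le k$, so $x\in I$.

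It remains to identify $I$, with its induced order and labels, with $\heap(\rw{u})$ via $i\mapsto\pi(i)$. This map preserves labels by construction, so we must show it preserves the order. Both posets are transitive closures of covering-type relations between elements whose labels differ by at most $1$.
\begin{itemize}
\item[(a)] For $i<j\le k$ with $|u_i-u_j|\le1$, the elements $\pi(i)$ and $\pi(j)$ have labels differing by at most $1$, so they are comparable in $H$. Since $\pi$ is a linear extension and $i<j$, we get $\pi(i)\prec\pi(j)$.
\item[(b)] Conversely, a relation $x\prec y$ in $H$ is witnessed by a chain of elements of $H$. If $x,y\in I$, every element of that chain lies in $I$ because $I$ is an ideal. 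Each step of the chain pairs labels differing by at most $1$ and is respected by $\pi$, so the chain pulls back to a chain in $\heap(\rw{u})$.
\end{itemize}
Hence $I\cong\heap(\rw{u})$ as labeled posets, which is the sense in which the equality $I=\heap(\rw{u})$ is meant.

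($\Leftarrow$) Suppose $I$ is an order ideal of $H$ with $I\cong\heap(\rw{u})$ for some reduced word $\rw{u}$ of $w$. The word $\rw{u}$ is a labeled linear extension of $\heap(\rw{u})$, hence of $I$; it comes from a linear extension $\sigma$ of $I$ with $\rw{u}=\rw{v_{\sigma(1)}\cdots v_{\sigma(k)}}$. Since $I$ is an ideal, its complement $H\setminus I$ is an up-set. Extend $\sigma$ by any linear extension of $H\setminus I$; the result is a linear extension of $H$, because no element of the up-set $H\setminus I$ lies below an element of $I$. Its labeled word has $\rw{u}$ as a prefix, and by Proposition~\ref{prop:set of labeled linear extensions is commutativity class Stembridge} it lies in the commutation class of $\rw{v}$. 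Therefore $w$ is a $c$-singleton.

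The main subtlety is not any single step but making precise the identification of $I$ with $\heap(\rw{u})$ as \emph{labeled} posets. Once that is fixed, the real content is direction (b) of ($\Rightarrow$): relations in $H$ between elements of $I$ must be realized through chains that stay inside $I$, and this is exactly what the order-ideal property guarantees.
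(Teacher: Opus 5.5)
Your proof is correct and follows the paper's route. The paper gives no separate argument: it states that the lemma follows from Proposition~\ref{prop:set of labeled linear extensions is commutativity class Stembridge}, i.e.\ that the labeled linear extensions of $\Heapwnot$ are exactly its commutation class. Your write-up supplies the details of that deduction, including the necessary reading of $I=\heap(\rw{u})$ as an isomorphism of \emph{labeled} posets.
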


The $c$-singletons form a distributive sublattice of the right weak order on $W$, denoted $\LatticeC$ \cite{HLT11}.
For a poset $H$, let $J(H)$ denote the lattice of order ideals of $H$.   
The following results were proven in
\cite[Proposition 3]{LL20} and 
\cite[Section 2]{cBirkhoffA} for type $A$; the same proofs hold for type $B$.

\begin{proposition}
\label{prop:poset isomorphism}
Let $\rw{u}=\rw{u_1 u_2 \dots u_{\Coxeterlength(w_0)}}$ denote $\sort(w_0)$, and consider the labeled poset $H=\heap(\rw{u})$ on $\{1,2,\dots,\Coxeterlength(w_0) \}$, following Definition~\ref{defn:heap of a reduced word}. 
Given an order ideal $I$ of $H$, let $\rw{u}_I=\rw{u_i}_{i\in I}$ denote the subword of $\rw{u}$ at positions $I$.
\begin{enumerate}
\item 
The word $\rw{u}_I$ is a $c$-sorting word. 
\item 
The map 
\begin{align*}
\MapPermutation \colon 
J(H)
& \to  
\LatticeC 
\\
\nonumber I &\mapsto \rw{u}_I 
\end{align*}
is a poset isomorphism, and 
the inverse map of $\MapPermutation$ is 
\begin{align*}
f: 
\LatticeC 
& \to J(H) 
\\
w & \mapsto \heap(\sort(w))
\end{align*}
\end{enumerate}
\end{proposition}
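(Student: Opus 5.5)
We need to prove Proposition \ref{prop:poset isomorphism}: (1) $\rw{u}_I$ is a $c$-sorting word for every order ideal $I$, and (2) $\MapPermutation$ is a poset isomorphism with inverse $f$.

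For (1), first note that $\rw{u}_I$ is reduced. A prefix of a labeled linear extension of $H$ is a prefix of a word in the commutation class of the reduced word $\rw{u}$ (Proposition~\ref{prop:set of labeled linear extensions is commutativity class Stembridge}), hence reduced. Listing $I$ in a linear extension of $H$ that begins with $I$ gives such a prefix, and it is a labeled linear extension of $\heap(\rw{u}_I)$. The heap of a subword on an order ideal is the induced subposet, since any chain in $H$ between elements of $I$ stays inside $I$. So $\rw{u}_I$ is reduced.

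Next we show $\rw{u}_I$ is the lexicographically first subword of $c^\infty$ for its element $w$. Because $\rw{u}$ is itself a subword of $c^\infty$, namely $c^n$ in type $B$ and $\rw{K_1\mid\cdots\mid K_p}$ in type $A$, the subword $\rw{u}_I$ inherits positions in $c^\infty$. I would use Reading's characterization of $c$-sorting words: a word $\rw{K_1\mid K_2\mid\cdots}$ of subwords of $c$ is $c$-sorting iff it is reduced and the supports are nested, $K_1\supseteq K_2\supseteq\cdots$ (together with the condition on the order of letters). Write $\rw{u}_I$ in its layers, i.e.\ the letters of $I$ lying in each copy of $c$. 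The order-ideal condition gives nestedness: if label $s$ occurs in layer $j+1$ of $I$, then the element labeled $s$ in layer $j$ lies below it in $H$ (same label), so it is in $I$. This step, verifying that Reading's criterion applies exactly, is the main obstacle. I would cite the criterion from \cite{reading07-clusters-paper4} rather than reprove it.

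For (2), $\MapPermutation$ is well defined into $\LatticeC$ by Lemma~\ref{lem:c-singleton iff linear extension of order ideal}. For injectivity, suppose $\rw{u}_I$ and $\rw{u}_{I'}$ are words for the same $w$. Both are $c$-sorting words of $w$ by (1), hence equal as words. Their positions in $c^\infty$ are then the same lexicographically-first positions, so $I=I'$ as subsets of positions. This shows $f\circ\MapPermutation=\mathrm{id}$, with $\heap(\sort(w))$ identified with the ideal $I$.

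For surjectivity, a $c$-singleton $w$ has, by Lemma~\ref{lem:c-singleton iff linear extension of order ideal}, a reduced word whose heap equals an order ideal $I$. Then $\rw{u}_I$ is a word for $w$ by Proposition~\ref{prop:set of labeled linear extensions is commutativity class Stembridge}.

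For order-preservation, $I\subseteq I'$ gives $\rw{u}_{I'}$ a linear extension beginning with $I$, so $\MapPermutation(I)\le\MapPermutation(I')$ in right weak order. Conversely, suppose $w\le w'$ are $c$-singletons. I would use that $c$-sorting words of elements below $w'$ in the sublattice of $c$-singletons are the prefixes obtained from ideals. Equivalently, $w^{-1}w'$ has inversions consistent with extending $I$. The cleanest route is to cite \cite{HLT11}: $\LatticeC$ is a distributive lattice whose join-irreducibles match those of $J(H)$. Since a bijective order-preserving map between finite lattices of equal size whose inverse preserves covers is an isomorphism, it remains to check that covers $w\lessdot ws$ in $\LatticeC$ correspond to adding one element to the ideal. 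This follows because $\ell(ws)=\ell(w)+1$ matches $|I'|=|I|+1$.
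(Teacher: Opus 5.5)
\textbf{Gap: the reverse order implication.} Most of your outline works: reducedness of $\rw{u}_I$, well-definedness, injectivity via (1), surjectivity, and the implication $I\subseteq I'\Rightarrow\MapPermutation(I)\le\MapPermutation(I')$. What is missing is the converse: that $w\le w'$ in right weak order forces $f(w)\subseteq f(w')$. This is exactly what separates an order-preserving bijection from a poset isomorphism, and your sketch does not supply it, for three reasons.
\begin{enumerate}
\item A cover in the induced subposet $\LatticeC$ need not a priori be a weak-order cover $w\lessdot ws$, because the elements strictly between $w$ and $w'$ in weak order might not be $c$-singletons. That covers of $\LatticeC$ have length difference one is a consequence of the proposition, not something you may assume.
\item Even granting $w'=ws$ with $\ell(ws)=\ell(w)+1$, the equality $|f(ws)|=|f(w)|+1$ gives no containment: two order ideals can differ in size by one without being nested.
\item Appealing to \cite{HLT11} for a matching of join-irreducibles with those of $J(H)$ assumes essentially what is to be proved, namely that your specific bijection respects both orders.
\end{enumerate}

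\textbf{The missing ingredient: a reflection attached to each heap element.} For $y\in H$ write $r_y$ for the simple reflection $s_{u_y}$. For a linear extension $\pi$ of $H$, put $\tau(\pi(k))=r_{\pi(1)}\cdots r_{\pi(k-1)}\,r_{\pi(k)}\,r_{\pi(k-1)}\cdots r_{\pi(1)}$.
\begin{enumerate}
\item Swapping two adjacent commuting letters $s,s'$ after a prefix $p$ exchanges the reflections $psp^{-1}$ and $ps'p^{-1}$ together with the letters. So by Proposition~\ref{prop:set of labeled linear extensions is commutativity class Stembridge}, $\tau$ is independent of $\pi$.
\item $\tau$ is injective because $\rw{u}$ is reduced.
\item Choosing $\pi$ to begin with an order ideal $I$ shows that the left inversion set $\{t\in T:\ell(t\,\MapPermutation(I))<\ell(\MapPermutation(I))\}$ is exactly $\tau(I)$, where $T$ is the set of reflections.
\end{enumerate}
Since $w\le w'$ in right weak order if and only if the left inversion set of $w$ is contained in that of $w'$, you get $\MapPermutation(I)\le\MapPermutation(I')$ if and only if $I\subseteq I'$. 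This gives both directions, and injectivity, at once. Your part (1) then identifies the inverse with $f$.

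\textbf{A smaller point in (1).} The criterion you cite is not an equivalence. For $c=s_1s_2$ the $c$-sorting word of $s_2s_1$ is $s_2\mid s_1$, which is not nested; nestedness characterizes $c$-sortable elements, not $c$-sorting words. You only need the direction ``reduced and nested implies $c$-sorting'', and it is quick to prove yourself. The lexicographically first subword is built greedily, taking a letter exactly when it is a left descent of what remains. At the $j$th copy of $c$, what remains lies in the parabolic subgroup generated by the $j$th layer, since later layers are contained in it. So no letter outside that layer is a left descent, and greedy selects precisely the positions of $\rw{u}_I$.
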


A \emph{barring} of a set $Z$ of integers is a partition of $Z$ into two sets $\underline{Z}$ and $\overline{Z}$. 
If $\lowerbarletter \in \underline{Z}$ (resp. $\upperbarletter \in \overline{Z}$), 
we call $\lowerbarletter$ a \emph{lower-barred number} (resp. we call $\upperbarletter$ an \emph{upper-barred number}) and sometimes emphasize this by writing $\underline{\lowerbarletter}$ (resp. $\overline{\upperbarletter}$).

Let $c$ be a Coxeter element in $W$. Let $Z=[2,n]= \{2,3,\ldots,n\}$ if $W=A_n$ and $Z=\pm[1,{n-1}]=\{{-(n-1)},\dots,{-2},{-1},1,2,\ldots,{n-1} \}$ if $W=B_n$. 
First, the barring of $[2,n]$ and $[1,{n-1}]$ in types $A$ and $B$ respectively is defined as follows: 
if $s_i$ appears after $s_{i-1}$ in any reduced decomposition of $c$, then $i$ is a lower-barred number; otherwise $i$ is an upper-barred number. 
For $W=B_n$, the barring of $[1,{n-1}]$ is extended to a barring of $Z=\pm[1,{n-1}]$ by specifying that the barring of ${-i}$ is opposite the barring of $i$.

We denote the lower-barred numbers by $\lowerbarletter_1< \ldots < \lowerbarletter_r$ and 
the upper-barred numbers by $\upperbarletter_1 < \ldots < \upperbarletter_s$.

\begin{remark}[{\cite[Section 3]{reading07-clusters-paper4}}]
\label{rem:Coxeter element in cycle notation} 
We can write a Coxeter element $c$ of $A_n$ as a single cycle of length $n+1$ of the form 
\[c=
(1~  \underline{\lowerbarletter_1} ~ \dots~  \underline{\lowerbarletter_r} ~ {(n+1)} ~ \overline{\upperbarletter_s} ~ \dots ~ \overline{\upperbarletter_1})
\] 
and the Coxeter element $c$ of $B_n$ 
as a single cycle of length $2n$ of the form \[c=
({-n}~  \underline{\lowerbarletter_1} ~ \dots ~  \underline{\lowerbarletter_r} ~ n ~ \overline{\upperbarletter_s} ~ \dots ~ \overline{\upperbarletter_1}).
\] 
\end{remark}

\begin{example}\label{ex:Coxeter elements A7 and B4}
Consider the Coxeter elements
$c^A=\genA_7 \genA_1 \genA_4 \genA_5 \genA_3 \genA_6 \genA_2$ in $A_7$ and 
$c^B=\genB_3 \genB_0 \genB_1 \genB_2$ in $B_4$ as in Figure~\ref{fig:longest element heap A7 and B4}.
Their heap diagrams are as follows. 
\def\myscale{0.69}
\begin{center}
\def\MyLocalGen{s}
\begin{tikzpicture}[scale=\myscale]
\node (B0) at (0,0) {$\MyLocalGen_4$};
\node (B1) at (1,1) {$\MyLocalGen_5$};
\node (B-1) at (-1,1) {$\MyLocalGen_3$};
\node (B2) at (2,2) {$\MyLocalGen_6$};
\node (B-2) at (-2,2) {$\MyLocalGen_2$};
\node (B3) at (3,1) {$\MyLocalGen_7$};
\node (B-3) at (-3,1) {$\MyLocalGen_1$};

\begin{scope}[black, thick, shorten <=-4pt, shorten >=-4pt]
\draw (B0) -- (B1); \draw (B1) -- (B2); \draw (B2) -- (B3);

\draw (B0) -- (B-1); \draw (B-1) -- (B-2); \draw (B-2) -- (B-3);
\end{scope}
\end{tikzpicture}
\qquad 
\begin{tikzpicture}[scale=\myscale]
\node (0) at (-1,0) {$\posetLABELwithS{0}$}; 

\node (1) at (0,1) {$\posetLABELwithS{1}$}; 
\node (2) at (1,2) {$\posetLABELwithS{2}$}; 
\node (3) at (2,1) {$\posetLABELwithS{3}$}; 

\begin{scope}[black, thick, shorten <=-4pt, shorten >=-4pt]
\draw (0) -- (1);
\draw (1) -- (2);
\draw (3) -- (2);
\end{scope}
\end{tikzpicture}
\end{center}
Then
$c^A=
\, (1
\,\, \underline{2} 
\,\, \underline{5} \,\, \underline{6} \,\, 8 \, \, \overline{7} \,\, 
\overline{4} \,\, 
\overline{3} \,\, 
)$ and  
$c^B=
\, (-4
\,\, \underline{-3} 
\,\, \underline{1} \,\, \underline{2} \,\, 4 \, \, \overline{3} \,\, 
\overline{-1} \,\, 
\overline{-2} \,\, 
)$. 
\end{example}

\section{Type $B$ $c$-Birkhoff polytopes}

\subsection{Embedding $B_n$ into $A_{2n-1}$}\label{sec:embedding}

For the rest of the paper, we will consider the Coxeter groups $B_n$ and $A_{2n-1}$. 
The two groups are related by an ``unfolding'' injective homomorphism $\UnfoldMap: B_n \to A_{2n-1}$, determined by \[
\UnfoldMap(\genB_i) = \begin{cases} \genA_n  & \text{if } i  = 0 \\
\genA_{n+i} \genA_{n-i} & \text{if }  i > 0\\
\end{cases}
\]
for each simple reflection $\genB_i$ of $B_n$. 

Thinking of an element $v\in B_n$ as a bijection on the set $\{{-n},\dots,{-1},1,\dots,n\}$, $\UnfoldMap(v)$ is the bijection on the set $\{1,\dots,{2n}\}$ given by $\theta \circ v \circ \theta^{-1}$ where $\theta$ maps $\{{-n},\dots,{-1},1,\dots,n\}$ to $\{1,\dots,{2n}\}$ in order.
In other words, $\UnfoldMap$ takes an element of $B_n$ in full one-line notation and replaces the numbers $-n,-n+1,\dots,-1,1,\dots,n$ with the numbers $1,2,\dots,n,n+1,\dots,2n$ in order.

Let $\cB$ denote a Coxeter element of $B_n$ and let $\cA = \UnfoldMap(\cB)$; note that $\cA$ is a Coxeter element of $A_{2n-1}$.

\begin{remark}
\label{rem:permutation from type B to type A}
The homomorphism $\UnfoldMap$ can be alternatively defined  as follows.
Let $v\in B_n$, and let $w=\UnfoldMap(v) \in A_{2n-1}=S_{2n}$.
Then, for $k=1,\dots,n$, we have 
\begin{align*}
w(n+k)&=
    v(k)+ n  
\text{ and }   
w(n+1-k)=
    -v(k)+ n + 1 &\text{ if } v(k) >0
    \\
w(n+k)&=    
    v(k)+ n + 1 
    \text{ and }
    w(n+1-k) =  -v(k)+ n
    &\text{ if } v(k) <0  
\end{align*}
\end{remark}

For example, if $v=
2 \, \, {(-3)} \, \, 4 \, \, 1 \, \, {(-1)} \, \, {(-4)} \, \, 3 \, \, {(-2)} \in B_4$ in full one-line notation then, replacing $\{-n,\dots,{-1},1,\dots,n\}$ with $\{1,\dots,2n\}$ in order, $\UnfoldMap(v)=6 \, 2 \, 8 \, 5 \, 4 \, 1 \, 7 \, 3 \in A_7$. We can also compute this using Remark~\ref{rem:permutation from type B to type A}. 
A second example is $\UnfoldMap(c^B)=c^A$, where $c^A$ and $c^B$ are as in Example~\ref{ex:Coxeter elements A7 and B4}.

\begin{remark}\label{rem:eta induces}
The unfolding homomorphism $\UnfoldMap$ induces a map that sends a reduced word $\rw{u}$ of an element in $B_n$ to a type $A$ reduced word $\eta(\rw{u})$ by replacing each copy of $0$ in $\rw{u}$ with $n$ and each copy of positive $i$ with the two-letter contiguous subword ${(n+i)} ~ {(n-i)}$.
\end{remark}

The symmetry we see in the barring of $[2,7]$ in Example~\ref{ex:Coxeter elements A7 and B4} holds in general:

\begin{lemma}\label{lem:SymmetryInBarring}
In the barring of $[2,2n-1]$ associated to the Coxeter element $\UnfoldMap(\cB) \in A_{2n-1}$,  the integer $i$ is lower-barred if and only if ${2n+1-i}$ is upper-barred. 
\begin{proof}
By the definition of $\eta$, for $k>0$ the integers ${n+k}$ and ${n-k}$ appear together in a reduced word of $\eta(\cB)$.
Thus the integer ${n+k}$ appears after ${n+k-1}$ if and only if ${n-k}$ appears after ${n-k+1}$ in any reduced word of $\UnfoldMap(\cB)$.  That is, the integer $n+k$ appears after $n+k-1$ if and only if $n-k+1$ appears before $n-k$ in any reduced word of $\UnfoldMap(\cB)$.
\end{proof}
\end{lemma} 

Given $w\in A_{2n-1}$, let $\PermMatrix{w}$ denote the corresponding permutation matrix. Specifically, let $\PermMatrix{w}$ be the matrix with $1$'s in row $i$ and column $w(i)$ for all $1 \leq i \leq 2n$ and $0$'s everywhere else. 
The $\cA$-Birkhoff polytope, denoted $\bir(\cA)$, 
was defined in \cite{cBirkhoffA}
to be the convex hull of 
 \[\{\PermMatrix{w}  \mid w  \text{ is a }\cA\text{-singleton}\}.\] 
The vertices of $\bir(\cA)$ are precisely the permutation matrices $\PermMatrix{w}$ where $w$ is a $\cA$-singleton. 
It was shown in \cite{cBirkhoffA} that $\bir(\cA)$ is integrally equivalent to the order polytope of $\heap(\mathrm{sort}_{\cA}(\LongEltA))$. 

\begin{definition}
We define the type $B_n$ $\cB$-Birkhoff polytope, denoted $\bir(\cB)$,
to be the convex hull of 
 \[\{\PermMatrix{\UnfoldMap(v)}  \mid v  \in B_n \text{ is a }\cB\text{-singleton}\}.\] 
\end{definition} 
The vertices of $\bir(\cB)$ are precisely the permutation matrices $\PermMatrix{\UnfoldMap(v)}$ where $v$ is a $\cB$-singleton. Let $\aff(\cA)$ (resp. $\aff(\cB)$) denote the affine hull of the vertices of $\bir(\cA)$
(resp. $\bir(\cB)$).

\subsection{Reflection-invariant order ideals and rotation-invariant matrices}

For the rest of this paper, let $\HHB=\heap(\mathrm{sort}_{\cB}(\LongEltB))$ and $\HHA=\heap(\mathrm{sort}_{\cA}(\LongEltA))$.

\begin{lemma}\label{lem:HA-symmetry}
Let $\rw{u}$ be a reduced word of an element in $B_n$. 
The type-$A$ heap 
$\heap(\eta(\rw{u}))$ has reflectional symmetry with respect to the vertical $y$-axis, and the right side of $\heap(\eta(\rw{u}))$ has the same underlying poset as $\heap(\rw{u})$.

In particular, $\HHA$ has reflectional symmetry with respect to the vertical $y$-axis, and the right side of $\HHA$ has the same underlying poset as $\HHB$.
\end{lemma}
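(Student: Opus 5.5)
We work directly with the heap poset of Definition~\ref{defn:heap of a reduced word}, placing an element with label $j$ in type $A_{2n-1}$ at horizontal coordinate $j-n$. The vertical axis is then the column of label $n$. The reflection is the relabeling $j\mapsto 2n-j$ on positions of $\eta(\rw{u})$, where $\eta(\rw{u})$ is obtained from $\rw{u}=\rw{u_1\cdots u_\ell}$ as in Remark~\ref{rem:eta induces}.

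\textbf{Step 1: positions of $\eta(\rw{u})$.} Each letter $u_x$ of $\rw{u}$ becomes either a single letter $n$ (if $u_x=0$) or a pair $(n+u_x)(n-u_x)$. Write $x^+$ for the letter $n+u_x$ and $x^-$ for the letter $n-u_x$; when $u_x=0$, set $x^+=x^-$ to be the letter $n$. Positions are ordered by $x$ first, and within a pair $x^+$ precedes $x^-$.

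\textbf{Step 2: cover relations of $\heap(\eta(\rw{u}))$.} By Definition~\ref{defn:heap of a reduced word}, two positions are related when their labels differ by at most $1$.
\begin{itemize}
\item For $x<y$: $x^+$ and $y^+$ have labels $n+u_x$ and $n+u_y$, so they are related if and only if $|u_x-u_y|\le 1$. The same holds for $x^-$ and $y^-$.
\item A mixed pair $x^{\pm}, y^{\mp}$ with $x\neq y$ has label difference $u_x+u_y$. This is at most $1$ only when $\{u_x,u_y\}\subseteq\{0,1\}$ contains a $0$, and then the $0$-element is simultaneously $x^+$ and $x^-$, so the relation is already of the first kind.
\item Within one pair, $x^+$ and $x^-$ have label difference $2u_x\ge 2$, so they are unrelated.
\end{itemize}
Hence the generating relations are exactly $x^{\epsilon}\prec y^{\epsilon}$ for $x<y$ with $|u_x-u_y|\le1$ and $\epsilon\in\{+,-\}$.

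\textbf{Step 3: the reflection is an automorphism.} Define $\sigma$ on positions by swapping $x^+\leftrightarrow x^-$. It sends label $j$ to $2n-j$, which is the geometric reflection about the vertical axis. By Step 2 the generating relations are symmetric in $\epsilon$, so $\sigma$ preserves them and hence preserves their transitive closure. Thus $\sigma$ is a label-reflecting poset automorphism.

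\textbf{Step 4: the right side is $\heap(\rw{u})$.} The right side is the set $\{x^+\}$ of positions with labels $\ge n$. The map $x\mapsto x^+$ sends the generating relations of $\heap(\rw{u})$ exactly onto the generating relations among the $x^+$, and it relabels $u_x$ as $u_x+n$.

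It remains to check that the induced order on the right side coincides with the restriction of the full order. A chain in the full generating relations never changes $\epsilon$ except through a $0$-element, which lies on both sides. Any such chain between two $+$ elements can therefore be read entirely within the $+$ copy, so restriction introduces no extra relations.

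\textbf{Step 5: the "in particular".} By the Lemma in the subsection on heaps of longest $c$-sorting words, $\mathrm{sort}_{\cB}(\LongEltB)=\cB^n$. Since $\eta(\cB)=\cA$, the word $\eta(\cB^n)=\cA^n$ is a reduced word for $\eta(\LongEltB)=\LongEltA$ of length $2n^2-n=\binom{2n}{2}$; it has this length because each nonzero letter of $\cB^n$ contributes two letters and each $0$ contributes one.

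We must also know that $\cA^n$ is in the commutation class of $\mathrm{sort}_{\cA}(\LongEltA)$. Heaps of commutation-equivalent words are isomorphic as labeled posets by Proposition~\ref{prop:set of labeled linear extensions is commutativity class Stembridge}, so this identifies $\HHA$ with $\heap(\cA^n)$ and the general statement then gives the claim.

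This last point is the main obstacle. I would first check whether $\cA^n$ is itself the lexicographically first subword of $\cA^\infty$ giving $\LongEltA$. If that fails, I would instead use that any reduced word of $\LongEltA$ that is a subword of $\cA^\infty$ and is commutation-equivalent to a $c$-sorting word determines the same heap. If that is also hard, the fallback is to compare $\HHA$ directly with the type-$A$ construction of~\cite[Algorithm~6.1]{cBirkhoffA}, using the symmetric barring of Lemma~\ref{lem:SymmetryInBarring}.
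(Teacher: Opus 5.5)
Your Steps 1--4 are correct, and they follow the same approach as the paper: they spell out in detail its one-line proof (``follows from the construction of $\eta$''). Your key observation is that every generating relation of $\heap(\eta(\rw{u}))$ has the form $x^{\epsilon}\prec y^{\epsilon}$ with $|u_x-u_y|\le 1$. It follows that swapping $x^+\leftrightarrow x^-$ is a label-reversing automorphism and that $x^{\pm}\mapsto x$ is order-preserving, which is all the general statement needs.

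The gap is in Step 5. You never establish that $\HHA$ equals (or is isomorphic to) $\heap(\eta(\mathrm{sort}_{\cB}(\LongEltB)))$, and the ``in particular'' rests entirely on this. The paper also takes this for granted. Your first suggested check succeeds at once, so no fallback is needed.

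Fix $\eta(\rw{\cB})$ as the reduced word for $\cA$ used to form $\cA^\infty$. This is harmless, since changing the reduced word of $c$ changes the $c$-sorting word only by commutations and so does not change the heap. Then $\eta(\cB^n)=\eta(\rw{\cB})^n$ is exactly the initial segment of length $N=n(2n-1)=\binom{2n}{2}$ of $\cA^\infty$. As you noted, it is a reduced word for $\eta(\LongEltB)=\LongEltA$. To justify that identity, which you asserted without proof: the full one-line notation $n\,(n-1)\cdots 1\,(-1)\cdots(-n)$ of $\LongEltB$ becomes $2n\,(2n-1)\cdots 1$ under $\theta$.

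Any subword of $\cA^\infty$ that is a reduced word for $\LongEltA$ occupies positions $p_1<\cdots<p_N$ with $p_i\ge i$. Hence the position sequence $(1,\dots,N)$ is lexicographically smallest among all candidates. Therefore $\mathrm{sort}_{\cA}(\LongEltA)=\eta(\mathrm{sort}_{\cB}(\LongEltB))$ literally, not just up to commutation. Applying your general statement with $\rw{u}=\mathrm{sort}_{\cB}(\LongEltB)$ then gives the claim about $\HHA$ and $\HHB$.
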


\begin{proof}
The conclusion follows from the construction of the induced map $\eta$ from reduced words of $v$ to reduced words of $\eta(v)$.
\end{proof}

See
Figure~\ref{fig:longest element heap A7 and B4} for the heap diagrams of $\HHA$ and $\HHB$ where
$\cB=\genB_3\genB_0\genB_1\genB_2$and $\cA=\UnfoldMap(\cB)=
\genA_7\genA_1 
\genA_4 
\genA_5\genA_3 
\genA_6 \genA_2$. 
This example illustrates Lemma~\ref{lem:HA-symmetry}.

Let $\rho: \HHA \to \HHA$ be the ``reflection" map  which sends every vertex in the heap diagram of $\HHA$ to its reflection with respect to the $y$-axis. 
This induces a map $J(\HHA)\to J(\HHA)$ which we will also call $\rho$. 
We say that $I\in J(\HHA)$ is \emph{reflection-invariant}  if $\rho(I) = I$.  
Let $J(\HHA)^F$ denote the set of reflection-invariant order ideals in $J(\HHA)$.

By Lemma~\ref{lem:HA-symmetry}, we can view $\HHB$ as the subposet on the right side of $\HHA$.  This allows us to define a bijection $\alpha: J(\HHB) \to J(\HHA)^F$ as follows: if $I^v$ is an order ideal of $\HHB$, let $\alpha(I^v)$ be the order ideal of $\HHA$ that contains elements on the right side of $\HHA$ corresponding to the elements of $I^v$ as well as the image of these elements under the map~$\rho$.

Consider the poset isomorphisms, $\MapPermutation$ and $f$, defined in Proposition~\ref{prop:poset isomorphism}. 
To distinguish between type $A$ and type $B$ versions of these maps, we will use a superscript $A$ or $B$.

\begin{lemma}
\label{lem:UnfoldMap}
Let $v$ be a $\cB$-singleton, and let $I^v$ denote the order ideal $f^B(v)$ in $J(\HHB)$.
Then we have the following.
\begin{enumerate}
\item  \label{lem:UnfoldMap:itm:0}
$\UnfoldMap(v)=\MapPermutation^A(\alpha(I^v))$. 
\item \label{lem:UnfoldMap:itm:1}
 $\UnfoldMap(v)$ is a $\cA$-singleton.

\item \label{lem:UnfoldMap:itm:2}
The order ideal $f^A(\UnfoldMap(v))$ is reflection-invariant.
\end{enumerate}

\end{lemma}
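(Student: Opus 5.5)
Part~(\ref{lem:UnfoldMap:itm:0}) carries all the content; parts~(\ref{lem:UnfoldMap:itm:1}) and~(\ref{lem:UnfoldMap:itm:2}) follow from it and Proposition~\ref{prop:poset isomorphism}. For~(\ref{lem:UnfoldMap:itm:0}), let $\rw{u}=\mathrm{sort}_{\cB}(\LongEltB)=(\cB)^n$. Since $\UnfoldMap(\cB)=\cA$, Remark~\ref{rem:eta induces} gives that $\eta(\rw{u})$ is a reduced word for $\UnfoldMap(\LongEltB)$. Reading $\UnfoldMap(\LongEltB)$ off the full one-line notation, it is the reversal permutation $\LongEltA$ of $S_{2n}$. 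Its length is $\ell(\LongEltA)=\binom{2n}{2}=2n^2-n$. This equals the length of $\eta(\rw{u})$, namely $n\cdot(1+2(n-1))$, so $\eta(\rw{u})$ is indeed reduced.

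We may not know that $\eta(\rw{u})$ literally equals $\mathrm{sort}_{\cA}(\LongEltA)$. What we need is that $\heap(\eta(\rw{u}))$ is $\HHA$ as a labeled poset, which is exactly how Lemma~\ref{lem:HA-symmetry} is phrased. By Proposition~\ref{prop:set of labeled linear extensions is commutativity class Stembridge}, this amounts to $\eta(\rw{u})$ lying in the commutation class of $\mathrm{sort}_{\cA}(\LongEltA)$, which I take as the content of Lemma~\ref{lem:HA-symmetry}. I identify the elements of $\heap(\rw{u})$ with the right-side elements of $\HHA$. Each letter $i>0$ at a given position of $\rw{u}$ becomes the pair of letters $n+i$, $n-i$ of $\eta(\rw{u})$, and these two are swapped by $\rho$. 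Each letter $0$ becomes the central letter $n$, which $\rho$ fixes.

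Now take $I^v=f^B(v)$. By Proposition~\ref{prop:poset isomorphism}, $v$ is the product of the subword $\rw{u}_{I^v}$. Take a linear extension of $I^v$ and apply $\eta$ letter by letter. The resulting word is a linear extension of $\alpha(I^v)$, because the order relations in $\heap(\eta(\rw{u}))$ among the images are exactly the mirrored relations of $\heap(\rw{u})$. The word $\eta(\rw{u}_{I^v})$ is therefore a reduced word for $\UnfoldMap(v)$, and it is a labeled linear extension of the order ideal $\alpha(I^v)$ of $\HHA$. Hence $\MapPermutation^A(\alpha(I^v))=\UnfoldMap(v)$.

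The main obstacle is checking that $\alpha(I^v)$ is an order ideal of $\HHA$ and that the word above is a linear extension of it. This requires that $\HHA$ has no cover relations from a left element to a right element other than those forced by letters $n\pm1$ meeting the central letter $n$. Those central letters correspond to letters $0$ in $\rw{u}$, and they lie in $I^v$ exactly when their mirrored counterparts do, so the check goes through.

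For~(\ref{lem:UnfoldMap:itm:1}), by~(\ref{lem:UnfoldMap:itm:0}) the element $\UnfoldMap(v)$ is in the image of $\MapPermutation^A$, so it is a $\cA$-singleton by Proposition~\ref{prop:poset isomorphism}. Equivalently, we can apply Lemma~\ref{lem:c-singleton iff linear extension of order ideal} to the reduced word $\eta(\rw{u}_{I^v})$ and the order ideal $\alpha(I^v)$.

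For~(\ref{lem:UnfoldMap:itm:2}), $f^A$ is the inverse of $\MapPermutation^A$. So $f^A(\UnfoldMap(v))=\alpha(I^v)$, which lies in $J(\HHA)^F$ by the construction of $\alpha$.
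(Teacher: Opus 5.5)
Your proposal is correct and takes the same route as the paper. Part (1) comes from the compatibility of $\eta$ with $\alpha$; you spell out the subword and order-ideal check that the paper leaves implicit. Parts (2) and (3) then follow from Proposition~\ref{prop:poset isomorphism}, exactly as in the paper.

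Your hedge about whether $\eta(\rw{u})$ literally equals $\mathrm{sort}_{\cA}(\LongEltA)$ is unnecessary. If the reduced word of $\cA$ is chosen as $\eta$ of that of $\cB$, then $\eta(\rw{u})=(\cA)^n$ is a reduced word for $\LongEltA$ occupying the first $\ell(\LongEltA)$ positions of $(\cA)^\infty$. It is therefore automatically the lexicographically first such subword, i.e.\ the sorting word itself.
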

\begin{proof} 
\eqref{lem:UnfoldMap:itm:0}
This follows from the construction of $\UnfoldMap$ and $\alpha$.

\eqref{lem:UnfoldMap:itm:1} 
Since $\UnfoldMap(v)$ is in the codomain of $\MapPermutation^A$ by \eqref{lem:UnfoldMap:itm:0}, we can conclude by Proposition~\ref{prop:poset isomorphism} that $\UnfoldMap(v)$ is a $\cA$-singleton.

\eqref{lem:UnfoldMap:itm:2} 
We have 
$\UnfoldMap(v)=\MapPermutation^A(\alpha(I^v))$ by \eqref{lem:UnfoldMap:itm:0}.  
Since $\alpha(I^v)$ is reflection-invariant and since $f^A$ and $\MapPermutation^A$ are inverse maps, the claim follows.
\end{proof}

\begin{remark}\label{rem:type B is affine subspace of type A}
It follows from Lemma~\ref{lem:UnfoldMap}\eqref{lem:UnfoldMap:itm:1} that $\bir(\cB)$ is a subpolytope of the $\cA$-Birkhoff polytope $\bir(\cA)$ and that $\aff(\cB)$ is an affine subspace of $\aff(\cA)$. 
\end{remark}

\def\wReverse{w^\text{rev}}
\def\wRevComp{w^\text{revcomp}}
\def\wComp{w^\text{comp}}

\begin{definition}
Let $w \in A_{m-1}=S_{m}$.
The \emph{reverse} of $w$, denoted $\wReverse$, is the result of writing $w$ in one-line notation backwards; that is, $\wReverse(i)=w(m+1-i)$. 
The \emph{complement} of $w$, denoted $\wComp$, is the result of replacing every entry $i$ in the one-line notation of $w$ with $m+1-i$. 
The \emph{reverse-complement} of $w$, denoted $\wRevComp$,  is the result of taking the complement of the reverse of $w$; that is, $\wRevComp(i)=m+1-w(m+1-i)$.
\end{definition}

The permutation matrix $X(\wReverse)$ is the result of reflecting the permutation matrix $X(w)$ with respect to a horizontal line, while 
$X(\wComp)$ is the result of reflecting $X(w)$ with respect to a vertical line. 
The composition of these two actions is the 180 degree rotation, and thus 
the 180 degree rotation of the permutation matrix $X(w)$ is the permutation matrix $X(\wRevComp)$ of the reverse-complement of $w$.

As a consequence, given a permutation $w \in A_{2n-1}=S_{2n}$, its permutation matrix $\PermMatrix{w}$ is invariant under 180 degree rotation if and only if $w$ is equal to its reverse-complement, that is, for all $1 \leq i \leq 2n$,  $w(i) = \wRevComp(i) = 2n+1 - w(2n+1-i)$. Equivalently, $X(w)$ is invariant under 180 degree rotation if and only if $w$ satisfies
\begin{equation} 
\label{rem:eq:180degRotation}
w({n+k}) + w({n+1-k}) = 2n+1
\end{equation} 
for all $k=1,\dots,n$.

Let $A_{2n-1}^{180}$ denote $\{ w \in A_{2n-1} \mid w \text{ satisfies \eqref{rem:eq:180degRotation}} \}$, that is, $A_{2n-1}^{180}$ is the set of permutations in $A_{2n-1}$ whose permutation matrices are invariant under 180 degree rotation. 

\begin{lemma}\label{lem:X(w) is 180 rotation invariant if w is in image of iota} 
The image  $\UnfoldMap(B_n)$ is equal to $A_{2n-1}^{180}$. 
\end{lemma}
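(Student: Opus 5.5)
We prove the two inclusions $\UnfoldMap(B_n)\subseteq A_{2n-1}^{180}$ and $A_{2n-1}^{180}\subseteq \UnfoldMap(B_n)$ directly, using the explicit description of $\UnfoldMap$ in Remark~\ref{rem:permutation from type B to type A}.

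\textbf{First inclusion.} Let $v\in B_n$ and $w=\UnfoldMap(v)$, and fix $k\in[n]$.
If $v(k)>0$, Remark~\ref{rem:permutation from type B to type A} gives
\[
w(n+k)+w(n+1-k)=\bigl(v(k)+n\bigr)+\bigl(-v(k)+n+1\bigr)=2n+1 .
\]
If $v(k)<0$, it gives $\bigl(v(k)+n+1\bigr)+\bigl(-v(k)+n\bigr)=2n+1$. Hence $w$ satisfies \eqref{rem:eq:180degRotation}, so $w\in A_{2n-1}^{180}$.

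An equivalent, more conceptual argument uses $w=\theta\circ v\circ\theta^{-1}$. The order-preserving bijection $\theta$ from $\pm[n]$ to $[2n]$ intertwines negation $x\mapsto -x$ with the involution $j\mapsto 2n+1-j$. Thus the defining property $v(-x)=-v(x)$ translates exactly into $w(2n+1-j)=2n+1-w(j)$, which is \eqref{rem:eq:180degRotation}.

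\textbf{Second inclusion.} Let $w\in A_{2n-1}^{180}$ and set $v=\theta^{-1}\circ w\circ\theta$. This is a bijection of $\pm[n]$ with $\UnfoldMap(v)=w$. By the same intertwining, the condition $w(2n+1-j)=2n+1-w(j)$ for all $j$ is equivalent to $v(-x)=-v(x)$ for all $x\in\pm[n]$. Hence $v\in B_n$.

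The only point needing care is the statement that $\theta$ intertwines the two involutions, namely $\theta(-x)=2n+1-\theta(x)$. This is checked from the formulas $\theta(x)=x+n+1$ for $x<0$ and $\theta(x)=x+n$ for $x>0$.

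It is also worth making the correspondence between \eqref{rem:eq:180degRotation} and the full condition $w(2n+1-j)=2n+1-w(j)$ explicit. As $k$ ranges over $[n]$, the pairs $\{n+k,\,n+1-k\}$ cover all of $[2n]$, so stating \eqref{rem:eq:180degRotation} for $k=1,\dots,n$ is the same as the condition for every $j\in[2n]$. None of these steps is a real obstacle; the proof is a routine verification.
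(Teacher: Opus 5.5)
Your proof is correct and follows the paper's route. The first inclusion is the same computation from the explicit formulas for $\eta$, and the paper's explicit inverse $j(w)$ is exactly your $\theta^{-1}\circ w\circ\theta$ written out coordinatewise. Your intertwining identity $\theta(-x)=2n+1-\theta(x)$ also makes explicit why this preimage is a genuine signed permutation, a point the paper leaves implicit.
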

\begin{proof} 
It follows from Remark~\ref{rem:permutation from type B to type A} 
that $\UnfoldMap(v)$ 
satisfies  \eqref{rem:eq:180degRotation} for each $v$ in~$B_n$. Thus $\UnfoldMap(B_n)$ is a subset of $A_{2n-1}^{180}$.

Conversely, we show that, if $w\in A_{2n-1}$ and $\PermMatrix{w}$ is invariant under 180 degree rotation, then $w \in \UnfoldMap(B_n)$. The inverse map $j: A_{2n-1}^{180} \to B_n$ of $\UnfoldMap$ can be described as follows. Let $w \in A_{2n-1}^{180}$.  Then $w(n+k) + w(n+1-k) = 2n+1$ for all $k=1,\dots,n$. 
Define $v'\in B_n$ to be 
\begin{align*}
v'(k)&=
\begin{cases}
    w(n+k)-n & \text{ if } w(n+k) \geq n+1 \\
     w(n+k)-n-1 & \text{ if } w(n+k) \leq n,
\end{cases}\\
 v'({-k})&=-v'(k)
\end{align*}
for all $k\in[n]$, and set $j(w)$ to be $v'$. Then $\UnfoldMap \circ j$ is the identity function on $A_{2n-1}^{180}$. 
\end{proof}

\subsection{Type $B$ lattice-preserving projection}

In \cite[Section 5]{cBirkhoffA}, we defined a projection 
$\Pi_{\cA}$
from the space of $(2n)\times(2n)$ $\mathbb{R}$-valued matrices to 
$\mathbb{R}^{\binom{2n}{2}}$ by choosing exactly $\binom{2n}{2}$ positions from a matrix $X$; the positions are determined by the Coxeter element $\cA$. 
We proved in \cite[Theorem~5.9]{cBirkhoffA} that $\Pi_{\cA}$ is injective on $\aff(\cA)$ and sends integral points to integral points.

\begin{proposition}[Zero relations {\cite[Proposition~4.4]{cBirkhoffA}}]
\label{prop:ZeroRelationsOnMatrix}
If $X \in \aff(\cA)$, then $X$ satisfies the following.
 \begin{itemize} 
 \item For each upper-barred $u$, we have $X(i,u) = 0$ for all $1 \leq i \leq \min({u-1}, {n+1-u})$.
 \item For each lower-barred $d$, we have $X(i,d) = 0$ for all $\max({d+1}, {n+3-d}) \leq i \leq {n+1}$.
\end{itemize}
\end{proposition}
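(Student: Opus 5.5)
Since $\aff(\cA)$ is the affine hull of the vertices $\PermMatrix{w}$ with $w$ a $\cA$-singleton, and each condition $X(i,u)=0$ is linear, it suffices to check the vanishing on every vertex. (Here I read the statement in the notation of \cite{cBirkhoffA}: $c$ is a Coxeter element of $A_n=S_{n+1}$, and matrices are $(n+1)\times(n+1)$.) Since $\PermMatrix{w}(i,u)=1$ exactly when $w(i)=u$, the claim becomes a statement about where the value $u$ can sit in a $c$-singleton $w$:
\begin{itemize}
\item if $u$ is upper-barred, then $w^{-1}(u)\geq \min(u,n+2-u)$;
\item if $d$ is lower-barred, then $w^{-1}(d)\leq \max(d,n+2-d)$.
\end{itemize}

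To prove this, I would read the heap $\Heapwnot$ as a wiring diagram. Take a linear extension of $\Heapwnot$, i.e. a word in the commutation class of $\sort(w_0)$ (Proposition~\ref{prop:set of labeled linear extensions is commutativity class Stembridge}). Applying its letters one at a time by right multiplication by $s_j$ swaps the entries in positions $j,j+1$. Along the way, each value $x$ traces a wire running from position $x$ to position $n+2-x$, since $w_0$ is reversal. Because the word is reduced for $w_0$, each pair of wires crosses exactly once. The wire of $u$ must therefore cross the $u-1$ wires of the smaller values and the $n+1-u$ wires of the larger values.

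By Lemma~\ref{lem:c-singleton iff linear extension of order ideal}, a $c$-singleton $w$ corresponds to an order ideal of $\Heapwnot$, i.e. a prefix of such a linear extension. So $w^{-1}(u)$ is the position of the wire $u$ at some intermediate time. It therefore suffices to show the following: for upper-barred $u$, the wire $u$ first performs all its crossings with larger values, each moving it one step right, and only afterwards all its crossings with smaller values, each moving it one step left. If so, its position is unimodal and never drops below $\min(u,n+2-u)$. The lower-barred case is symmetric: there the wire moves left first and then right, so its position never exceeds $\max(d,n+2-d)$. Whether "left crossings first" or "right crossings first" occurs is commutation-invariant, since the relative order of two crossings on the same wire is a relation in the heap.

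The main obstacle is this monotonicity statement. I would prove it from the layered form $\sort(w_0)=\rw{K_1\mid\cdots\mid K_p}$ with $K_1\supseteq\cdots\supseteq K_p$, equivalently from the cycle form of $c$ in Remark~\ref{rem:Coxeter element in cycle notation}. First, within one copy of $c$, a wire at position $p$ moves right through a maximal run $s_p,s_{p+1},\dots$ in which each letter appears after its predecessor in $c$. It moves left through a run $s_{p-1},s_{p-2},\dots$ in which each letter appears after its successor in $c$. This is exactly the barring data. Second, I would argue that the one-step action of $c$ on values is the cycle $c$. Under this action, an upper-barred value is carried along the upper arc $n+1\to\overline{u_s}\to\cdots\to\overline{u_1}\to 1$ in the cycle form. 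Hence an upper-barred $u$ first travels up toward the position of $n+1$ and then down, never reversing direction.

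The fallback is a direct check from the explicit construction of $\Heapwnot$ in \cite[Algorithm~6.1]{cBirkhoffA}. There one would verify that every heap element on the wire of $u$ involving a larger value lies below every one involving a smaller value. This is a chain-comparability check in the heap, and I expect it to reduce to the barring condition that $s_u$ precedes $s_{u-1}$ in $c$.
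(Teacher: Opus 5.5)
Your reduction is correct. The vanishing conditions are linear, so it suffices to check the vertices $\PermMatrix{w}$, and $\PermMatrix{w}(i,u)=1$ iff $w(i)=u$. The crossings on a single wire form a chain in $\Heapwnot$, so the position of $u$ in a $c$-singleton is its position after an initial segment of that chain. Unimodality then gives exactly the bounds $\min(u,n+2-u)$ and $\max(d,n+2-d)$.

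\textbf{The gap.} The monotonicity lemma is never proved, and it carries the entire content of the proposition. It is equivalent to saying that no $c$-singleton has an upper-barred $u$ followed by both a smaller and a larger entry (dually for lower-barred $d$), and the proposition follows from that by counting. The present paper only quotes the statement from \cite[Proposition~4.4]{cBirkhoffA}, so this step cannot be outsourced. Your sketch does not establish it, for three reasons.
\begin{itemize}
\item The within-one-copy analysis assumes every letter the wire meets is actually performed. But $\sort(w_0)=\rw{K_1\mid\cdots\mid K_p}$ has truncated blocks, and you never explain why the letters a rising wire needs are not among those dropped. So the block-by-block cycle description has no justification beyond $K_1$.
\item The direction is reversed. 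Under right multiplication a full copy of $c$ sends the wire at position $p$ to $c^{-1}(p)$, so it runs $\overline{u_i}\to\overline{u_{i+1}}\to\cdots\to\overline{u_s}\to n+1$. Following $n+1\to\overline{u_s}\to\cdots\to\overline{u_1}\to 1$ as you wrote would send $u$ downward first, contradicting your ``hence''.
\item The fallback via \cite[Algorithm~6.1]{cBirkhoffA} is stated only as an expectation.
\end{itemize}

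\textbf{How to close it within your own framework.} The missing ingredient is the greedy description of $\sort(w_0)$. Every reduced word extends to a reduced word for $w_0$, and every word is a subword of $c^\infty$. Hence the lexicographically first reduced subword keeps a letter $s_j$ exactly when positions $j,j+1$ currently form an ascent. Now let $u$ be upper-barred.
\begin{itemize}
\item As long as the wire of $u$ has not crossed a smaller value, every value to its right is larger. So every occurrence of $s_p$ it meets at a position $p\le n$ is kept and moves it right.
\item At time $0$ the first letter touching position $u$ is $s_u$, because $s_u$ precedes $s_{u-1}$ in $c$.
\item Occurrences of $s_{p-1}$ and $s_p$ alternate in $c^\infty$. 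So once the wire enters position $p$ from the left via $s_{p-1}$, the next letter touching it is $s_p$.
\end{itemize}
Hence the wire moves right at every opportunity until it reaches position $n+1$. At that point all larger values have been crossed, so it can only move left afterwards. The lower-barred case is symmetric. Note that the barring is used only for the very first move, not through the cycle structure of $c$.
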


The projection $\Pi_{\cA}$ never includes positions on the main diagonal or positions whose entries are guaranteed to be zero for $X \in \aff(\cA)$ by Proposition~\ref{prop:ZeroRelationsOnMatrix}.  When positions below the main diagonal are included, these positions must come from the bottom half of~$X$.

In the following,  we view the projection $\Pi_{\cA}$ as a subset of $[2n] \times [2n]$.

\begin{lemma}\label{lem:n squared}
Let $X$ be a $(2n)\times(2n)$ $\mathbb{R}$-valued matrix. 
The number of positions 
that the map $\Pi_{\cA}$ chooses from the top half (rows 1 through $n$) of $X$ is $n^2$.
\end{lemma}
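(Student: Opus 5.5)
The plan is to count column by column, using the explicit description of $\Pi_{\cA}$ from \cite[Section 5]{cBirkhoffA}. There, the positions chosen in each column $j$ of a $(2n)\times(2n)$ matrix are determined by whether $j$ is $1$, $2n$, lower-barred or upper-barred. They form a contiguous run of rows, bounded on one side by the main diagonal and on the other by the zero region of Proposition~\ref{prop:ZeroRelationsOnMatrix} (with $2n-1$ in place of $n$). For each column $j$, let $N(j)$ denote the number of chosen positions in column $j$ that lie in rows $1,\dots,n$. The goal is then $\sum_{j=1}^{2n} N(j)=n^2$.

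The key step is to pair column $j$ with column $2n+1-j$ for $1\le j\le n$. By Lemma~\ref{lem:SymmetryInBarring}, $j$ is lower-barred if and only if $2n+1-j$ is upper-barred (the extreme columns $1$ and $2n$ being the fixed endpoints of the cycle in Remark~\ref{rem:Coxeter element in cycle notation}). So in each pair, one column is governed by the upper-barred rule and the other by the lower-barred rule, or both are endpoint columns. I would show that
\[
N(j)+N(2n+1-j) = 2n+1-2j \qquad (1\le j\le n),
\]
independent of the barring. Summing gives $\sum_{j=1}^n (2n+1-2j) = n^2$, as desired.

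To verify the pair identity, I would treat the two cases (column $j$ lower-barred with $2n+1-j$ upper-barred, or the reverse) and compute both runs directly from the inequalities in Proposition~\ref{prop:ZeroRelationsOnMatrix}. The minimum $\min(u-1, 2n-u)$ and maximum $\max(d+1, 2n+2-d)$ there switch branches exactly at the middle of the matrix. This should make the top-half portion of a column's run complementary to the top-half portion of its mirror column's run, within a window of length $2n+1-2j$. The constraint that below-diagonal positions come only from the bottom half shows that below-diagonal entries do not contribute to $N$. As a sanity check, the total over all rows is $\binom{2n}{2}=n(2n-1)$, so the bottom half must contain $n(n-1)$ chosen positions; I would confirm this on the running example $\cA=\UnfoldMap(\genB_3\genB_0\genB_1\genB_2)$.

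The main obstacle is the bookkeeping in the pair identity: recovering the exact row ranges of $\Pi_{\cA}$ from \cite{cBirkhoffA} and checking the boundary cases. These are the endpoint columns $1$ and $2n$, and the middle columns $n$ and $n+1$, where the $\min$ and $\max$ in Proposition~\ref{prop:ZeroRelationsOnMatrix} are tight and off-by-one errors are most likely. If the direct pairing fails to give a clean identity, I would instead use induction on $n$: remove $\genB_{n-1}$ from $\cB$, which removes the outermost generators $\genA_1$ and $\genA_{2n-1}$ from $\cA$. I would then track how many top-half positions are added, aiming to show this number is $2n-1$, so that the count goes from $(n-1)^2$ to $n^2$.
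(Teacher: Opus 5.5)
Your key pair identity $N(j)+N(2n+1-j)=2n+1-2j$ is false, so the step you plan to verify cannot be verified. The final total $n^2$ comes out only because $\sum_{j=1}^n(2n+1-2j)$ happens to equal $n\cdot n$.

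\begin{itemize}
\item The identity already fails for $j=1$. Column $1$ has no positions strictly above the diagonal, and below-diagonal positions of $\Pi_{\cA}$ occur only in the bottom half, so $N(1)=0$. Since $N(2n)\le n<2n-1$, the pair cannot sum to $2n-1$.
\item In the running example ($n=4$, $\cA=\UnfoldMap(\genB_3\genB_0\genB_1\genB_2)$), Figure~\ref{fig:ex:projection} gives $N(1),\dots,N(8)=0,1,0,0,4,4,3,4$. Every pair sums to $4$, not to $7,5,3,1$.
\item Your structural description of $\Pi_{\cA}$ is also wrong for whole columns. Column $2$ selects rows $1,6,7$, and column $3$ selects only row $8$. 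The selected rows are neither contiguous nor bounded by the diagonal and the zero region.
\end{itemize}
What is true, and all you need, is this: within rows $1,\dots,n$, the map $\Pi_{\cA}$ selects exactly the positions strictly above the diagonal that Proposition~\ref{prop:ZeroRelationsOnMatrix} does not force to be zero.

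With that fact, your pairing works, but the pair sum is $n$. Fix $j\le n$.
\begin{itemize}
\item Column $j$ has $j-1$ above-diagonal positions in the top half, and column $2n+1-j$ has $n$.
\item Column $k\in[2,2n-1]$ has $\min(k-1,2n-k)$ forced zeros. They lie in the top half exactly when $k$ is upper-barred.
\item By Lemma~\ref{lem:SymmetryInBarring}, exactly one of $j$ and $2n+1-j$ is upper-barred when $j\ge 2$. Either way the pair has $j-1$ forced zeros in the top half; for $j=1$ there are none.
\end{itemize}
Hence $N(j)+N(2n+1-j)=(n+j-1)-(j-1)=n$, and summing over $j$ gives $n^2$. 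This corrected argument is a column-by-column version of the paper's proof. The paper counts the $n^2+\binom{n}{2}$ above-diagonal positions in the top half and subtracts the $\binom{n}{2}$ forced zeros there, which are half of the total $2\binom{n}{2}$ by the same symmetry lemma.
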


\begin{proof}
The subset of entries taken by $\Pi_{\cA}$ in the top half of $X$
are exactly those above the main diagonal and not in a spot which is guaranteed to be $0$ for a matrix in $\aff(\cA)$ (as described in Proposition~\ref{prop:ZeroRelationsOnMatrix}). 
The number of entries above the main diagonal and in the top half are $(2n-1) + (2n-2) + \cdots + n = n^2 + {\binom{n}{2}}$.  
The total number of entries in $X$ guaranteed to be $0$ is $2 {\binom{n}{2}}$, and Lemma~\ref{lem:SymmetryInBarring} implies that exactly half of these will be above the main diagonal.  Therefore,  $\Pi_{\cA}$ restricted to $[n] \times [2n]$ has exactly $n^2$ entries.  
\end{proof}

It follows from Lemma~\ref{lem:X(w) is 180 rotation invariant if w is in image of iota} that, for each $v \in B_n$, the top half of $\PermMatrix{\UnfoldMap(v)}$ determines its bottom half. 
In view of this fact and Lemma~\ref{lem:n squared}, we define
\[
\text{$\Pi_{\cB}: 
\{(2n)\times(2n)$ $\mathbb{R}$-valued matrices$\}\to\mathbb{R}^{n^2}$}
\]
to be the map that extracts the $n^2$ elements of a matrix $X$ selected by $\Pi_{\cA}$ that are in the top half of the matrix.

\begin{example}\label{ex:projection} 
Consider 
$c^A=
\, (1
\,\, \underline{2} 
\,\, \underline{5} \,\, \underline{6} \,\, 8 \, \, \overline{7} \,\, 
\overline{4} \,\, 
\overline{3} \,\, 
)$ and  
$c^B=
\, ({-4}
\,\, \underline{-3} 
\,\, \underline{1} \,\, \underline{2} \,\, 4 \, \, \overline{3} \,\, 
\overline{-1} \,\, 
\overline{-2} \,\, 
)$ from Example~\ref{ex:Coxeter elements A7 and B4}.
Figure~\ref{fig:ex:projection} shows the projection $\Pi_{\cA}$ and $\Pi_{\cB}$ (first two pictures). 
It also shows the permutation matrix for $\UnfoldMap(v) \in S_8$ for the $\cB$-singleton $v=(1 \ 4 \ 3 \ {-1} \ {-4} \ {-3})(2 \quad {-2})$ with circles around entries recorded by $\Pi_{\cB}$ (third picture).
\begin{figure}[htbp]
\begin{center}
\resizebox{\columnwidth}{!}{%
\begin{tabular}{|c|c|c|c|c|c|c|c|}
\hline
  & $28$&$\X$&$\X$&$24$&$19$&$\X$ & $12$ \\ \hline
  &   & $\X$  & $\X$ & $25$ & $20$ & $6$ & $13$ \\ \hline
 & & & $\X$ & $26$ & $21$ & $7$ & $14$ \\ \hline
 & & & & $27$&$22$&$8$&$15$ \\ \hline
 & & & & &$23$&$9$&$16$ \\ \hline
 &$3$& & &$\X$ &   & $10$ & $17$ \\\hline
$4$&$1$& & &$\X$&$\X$& & $18$ \\ \hline
$2$&$\X$&$5$&$11$&$\X$&$\X$& & \\ \hline
\end{tabular}
\quad
\begin{tabular}{|c|c|c|c|c|c|c|c|}
\hline
 \hspace{3mm} & $16$&$\X$&$\X$&$12$&$8$&$\X$ & $4$ \\ \hline
  &   & $\X$  & $\X$ & $13$ & $9$ & $1$ & $5$ \\ \hline
 & & & $\X$ & $14$ & $10$ & $2$ & $6$ \\ \hline
 & & & & $15$&$11$&$3$&$7$ \\ \hline
 & & & & & & & \\ \hline
 & & & &$\X$ &   &  &  \\\hline
~~ & & & &$\X$&$\X$& &  \\ \hline
 &$\X$& & &$\X$&$\X$& & \\ \hline
\end{tabular}
\quad 
\begin{tabular}{|c|c|c|c|c|c|c|c|}
\hline
 $0$ &  $\Circled{1}$& $0$&$0$&$\Circled{0}$&$\Circled{0}$&$0$ & $\Circled{0}$ \\ \hline
$0$  & $0$  & $0$  & $0$ &  $\Circled{1}$ & $\Circled{0}$ & $1$ & $\Circled{0}$ \\ \hline 
$0$ & $0$ & $0$ & $0$ & $ $$\Circled{0}$ & $\Circled{1}$ & $\Circled{0}$ & $\Circled{0}$ \\ \hline
$1$ & $0$ & $0$ & $0$ & $\Circled{0}$&$\Circled{0}$&$\Circled{0}$&$\Circled{0}$ \\ \hline
$0$ & $0$ & $0$ & $0$ & $0$ & $0$ & $0$ & $1$ \\ \hline
$0$ & $0$ & $1$ & $0$ &$0$ & $0$ & $0$ & $0$   \\\hline
$0$ & $0$ & $0$ & $1$ &$0$&$0$& $0$ & $0$  \\ \hline
$0$ &$0$& $0$ & $0$ &$0$&$0$& $1$ & $0$ \\ \hline
\end{tabular}
}

\caption{First two pictures: Projections $\Pi_{\cA}$ and $\Pi_{\cB}$ of Example \ref{ex:projection}. Red~\textcolor{red}{$X$}'s indicate the entries which must be zero by Proposition~\ref{prop:ZeroRelationsOnMatrix}. Numbers indicate entries chosen by $\Pi_{\cA}$ and $\Pi_{\cB}$, respectively, in order.  Third picture: The permutation matrix for  $\UnfoldMap(v) \in S_8$ for $v=(1 ~~ 4 ~~ 3 ~~ {-1} ~~ {-4} ~~ {-3})(2 ~~ {-2})$,
circling the entries recorded
 by $\Pi_{\cB}$.}\label{fig:ex:projection}
\end{center}
\end{figure}
\end{example}

\begin{proposition}\label{prop:PicbIsInjective}
The map $\Pi_{\cB}$ is a linear transformation which is injective on $\aff(\cB)$ and sends integral points to integral points.
\end{proposition}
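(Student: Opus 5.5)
Linearity and integrality are immediate: $\Pi_{\cB}$ is a coordinate projection. It reads off $n^2$ fixed entries of a $(2n)\times(2n)$ matrix, so it is linear and sends integer matrices to integer vectors. The content of the statement is injectivity on $\aff(\cB)$. My plan is to reduce this to the type $A$ result, \cite[Theorem~5.9]{cBirkhoffA}, which says that $\Pi_{\cA}$ is injective on $\aff(\cA)$. Remark~\ref{rem:type B is affine subspace of type A} gives $\aff(\cB)\subseteq\aff(\cA)$, so $\Pi_{\cA}$ is injective on $\aff(\cB)$. It therefore suffices to show that for $X\in\aff(\cB)$ the full vector $\Pi_{\cA}(X)$ is determined by $\Pi_{\cB}(X)$.

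The key step is to show that every $X\in\aff(\cB)$ is invariant under 180 degree rotation, i.e. $X(i,j)=X(2n+1-i,2n+1-j)$ for all $i,j$. By Lemma~\ref{lem:X(w) is 180 rotation invariant if w is in image of iota}, each vertex $\PermMatrix{\UnfoldMap(v)}$ of $\bir(\cB)$ lies in $A_{2n-1}^{180}$, so its matrix is rotation-invariant. These rotation-invariance conditions are linear equations. The set of rotation-invariant matrices is therefore a linear subspace containing all vertices, and hence contains their affine hull $\aff(\cB)$.

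Next I use the rotation to relate the two projections. Suppose $X,Y\in\aff(\cB)$ with $\Pi_{\cB}(X)=\Pi_{\cB}(Y)$. Then $X$ and $Y$ agree on every position of $\Pi_{\cA}$ in rows $1,\dots,n$. Take a position $(i,j)$ of $\Pi_{\cA}$ in the bottom half, $i\ge n+1$. By rotation invariance, $X(i,j)=X(2n+1-i,2n+1-j)$, and the rotated position lies in the top half. It remains to check that $X$ and $Y$ agree there. If the rotated position is one of the positions selected by $\Pi_{\cB}$, we are done. Otherwise I must show that the value there is still determined for matrices in $\aff(\cB)$, either because it is a forced zero or because it is an affine function of the selected top-half coordinates.

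This last check is where I expect the main obstacle. My first attempt is the following. By the description of $\Pi_{\cA}$, bottom-half chosen positions lie below the diagonal, so the rotated position $(2n+1-i,2n+1-j)$ lies above the diagonal in the top half. By the proof of Lemma~\ref{lem:n squared}, the top-half positions not chosen by $\Pi_{\cA}$ are exactly the forced zeros of Proposition~\ref{prop:ZeroRelationsOnMatrix}, and these vanish on $\aff(\cA)\supseteq\aff(\cB)$. So the rotated entry is either a $\Pi_{\cB}$ coordinate or zero on both matrices. If a diagonal case sneaks in, I would instead use the row and column sum relations valid on $\aff(\cA)$ to express it. Once this is settled, $\Pi_{\cA}(X)=\Pi_{\cA}(Y)$, and type $A$ injectivity gives $X=Y$.
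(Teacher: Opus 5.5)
\paragraph{Review.} Your arguments for linearity, integrality, and rotation invariance of $\aff(\cB)$ are correct and match the paper. The step you flag as the obstacle, however, is a genuine gap, and your first attempt at it reads a sentence of the paper backwards.

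The paper says that positions of $\Pi_{\cA}$ lying \emph{below the diagonal} are in the bottom half. It does not say that positions of $\Pi_{\cA}$ in the bottom half lie below the diagonal, and that converse is false. In Figure~\ref{fig:ex:projection}, $\Pi_{\cA}$ selects $(5,6),(5,7),(5,8),(6,7),(6,8),(7,8)$. Their rotations $(4,3),(4,2),(4,1),(3,2),(3,1),(2,1)$ lie strictly below the diagonal in the top half. There they are neither $\Pi_{\cB}$-coordinates nor forced zeros. So the case you hoped to avoid is the whole content of the proposition: you must show that the entries $X(k,j)$ with $1\le j\le k\le n$ are determined by $\Pi_{\cB}(X)$ for $X\in\aff(\cB)$.

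Your fallback of row and column sums cannot do this once $n\ge 3$. Under rotation invariance, the sums of rows $n+k$ and $n+1-k$ coincide, and so do the sums of columns $j$ and $2n+1-j$. After substituting the known entries (selected or forced zero), you are left with a lower-triangular $n\times n$ array of unknowns with prescribed row and column sums. That gives at most $2n-1$ independent equations for $\binom{n+1}{2}$ unknowns, which is too few when $n\ge 3$.

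The missing ingredient is the type-$A$ top sum relations \cite[Theorem~4.11]{cBirkhoffA}. These give $k$ relations among the entries of rows $1$ through $k$, valid on $\aff(\cA)\supseteq\aff(\cB)$. Proceed by induction on $k=1,\dots,n$, as in the proof of \cite[Theorem~5.9]{cBirkhoffA}. At step $k$ you know the rows above and the entries of row $k$ in columns $k+1,\dots,2n$, and the relations then recover the first $k$ entries of row $k$. This is exactly what the paper does. Once the top half is known, rotation invariance gives the bottom half directly, so your detour through the injectivity of $\Pi_{\cA}$ becomes unnecessary.
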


\begin{proof}
Consider $X \in \aff(\cB)$. Recall that $\aff(\cB)$ is an affine subspace of $\aff(\cA)$ (see Remark \ref{rem:type B is affine subspace of type A}), so $X$ is also in $\aff(\cA)$. 
Therefore, the first $k$ positions in row $k$ for $1 \leq k \leq n$ are not guaranteed to be zero by Proposition~\ref{prop:ZeroRelationsOnMatrix} and also not chosen by $\Pi_{\cB}$.  The other entries in the row are either guaranteed to be zero by Proposition~\ref{prop:ZeroRelationsOnMatrix} or are chosen by $\Pi_{\cB}$.

Also, \cite[Theorem 4.11 ``Top sum relations'']{cBirkhoffA} tells us that there are $k$ relations involving entries of rows $1$ through $k$. Working from $k=1$ to $k=n$, we can use these relations, the values of the entries in rows above row $k$, and 
the entries of row $k$ in columns $k+1$ through $2n$
to determine the first $k$ entries of the $k$th row (see the proof of \cite[Theorem 5.9]{cBirkhoffA}).  This shows that $\Pi_{\cB}$ determines the entire top half of $X$.  By Lemma \ref{lem:X(w) is 180 rotation invariant if w is in image of iota}, we know that $X$ is invariant under 180 degree rotation, and thus the top half of~$X$ determines the bottom half of~$X$.
\end{proof}

\subsection{Proof of main theorem}
In this section, we will prove our main theorem using a composition of several maps.  The maps we will define and use throughout the section are depicted in the following commutative diagram.

\begin{equation}
\label{eq:diagram_commutes}
\begin{tikzcd}
\aff(\cA) \arrow[r,"\Pi_{\cA}"] & 
\Pi_{\cA}(\aff(\cA))\arrow[r,"\mathcal{U}_{\cA}"] &  
\mathbb{R}^{\binom{2n}{2}}\arrow[d,two heads,"P"] \\
\aff(\cB) \arrow[r,"\Pi_{\cB}"]\arrow[u,hook,"\iota"] & \Pi_{\cB}(\aff(\cB))\arrow[r,dashed, "{\mathcal{U}_{\cB}}"]\arrow[u,hook, "L"] &  
\mathbb{R}^{n^2} \\
\end{tikzcd}
\end{equation}

Recall from Remark~\ref{rem:type B is affine subspace of type A} that $\aff(\cB)$ is an affine subspace of $\aff(\cA)$, so we can define $\iota$ to be the inclusion map $\aff(\cB)\to \aff(\cA)$. 
We now define the map $L$. Since $\Pi_{\cB}$ is injective, if we restrict its codomain to be $\Pi_{\cB}(\aff(\cB))$ then it is bijective. Thus it has an inverse $\Pi_{\cB}^{-1}:\Pi_{\cB}(\aff(\cB))\to\aff(\cB)$.  Define $L=\Pi_{\cA}\circ\iota\circ\Pi_{\cB}^{-1}$. Since $L$ is a composition of injective functions, $L$ is injective.

Recall the map $f$ from Proposition~\ref{prop:poset isomorphism} and let $o(I)$ denote the indicator vector of an order ideal $I$ of a poset.  
In the proof of our main theorem in \cite{cBirkhoffA}, we showed the existence of a unimodular transformation $\mathcal{U}_{\cA}$ from $\Pi_{c^A}(\aff(c^A))$ to $\mathbb{R}^{\binom{2n}{2}}.$
By~\cite[Theorem 6.21]{cBirkhoffA}, 
\begin{align}
\label{eq:Theorem6.21}
\text{$\mathcal{U}_{\cA}\circ \Pi_{\cA}(\PermMatrix{w}) = o(f^A(w))$ for all $\cA$-singletons $w$}. 
\end{align}
Since $L$ is a map from $\Pi_\cB(\aff(\cB))$ to $\Pi_\cA(\aff(\cA))$, we can consider the composition $\mathcal{U}_{\cA} \circ L$. 

Let $\aff(J(\HHA))$ denote the affine hull of indicator vectors of order ideals of $\HHA$, and let $\aff(J(\HHA)^F)$ 
denote the affine hull of indicator vectors of
reflection-invariant order ideals of $\HHA$. 

\begin{lemma}\label{lem:ImageOfUL}
The image $\mathcal{U}_{\cA} \circ L ~ (\Pi_{\cB}(\aff(\cB)))$ is contained in $\aff(J(\HHA)^F)$.
\end{lemma}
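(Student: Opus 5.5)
The plan is to reduce the claim to vertices and then use linearity. First I would check that $\mathcal{U}_{\cA}\circ L$ is affine on $\Pi_{\cB}(\aff(\cB))$. By definition $L=\Pi_{\cA}\circ\iota\circ\Pi_{\cB}^{-1}$. Here $\Pi_{\cB}^{-1}$ is the inverse of the restriction of the linear map $\Pi_{\cB}$ to the affine space $\aff(\cB)$, and this restriction is injective by Proposition~\ref{prop:PicbIsInjective}; hence $\Pi_{\cB}^{-1}$ is affine. The maps $\iota$ and $\Pi_{\cA}$ are affine (in fact linear), and $\mathcal{U}_{\cA}$ is a unimodular, hence affine, transformation. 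So the composition $\mathcal{U}_{\cA}\circ L$ is an affine map.

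Next I would note that $\aff(\cB)$ is the affine hull of the matrices $\PermMatrix{\UnfoldMap(v)}$ with $v$ a $\cB$-singleton. An affine bijection carries affine hulls to affine hulls, so $\Pi_{\cB}(\aff(\cB))$ is the affine hull of the points $\Pi_{\cB}(\PermMatrix{\UnfoldMap(v)})$. Since affine maps preserve affine combinations, it suffices to show that for every $\cB$-singleton $v$ the point $\mathcal{U}_{\cA}\circ L(\Pi_{\cB}(\PermMatrix{\UnfoldMap(v)}))$ lies in $\aff(J(\HHA)^F)$. The set $\aff(J(\HHA)^F)$ is affine, so it is closed under affine combinations, and this vertex case gives the full statement.

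For a vertex the computation unwinds directly. We have $L(\Pi_{\cB}(\PermMatrix{\UnfoldMap(v)}))=\Pi_{\cA}(\PermMatrix{\UnfoldMap(v)})$. By Lemma~\ref{lem:UnfoldMap}\eqref{lem:UnfoldMap:itm:1}, $\UnfoldMap(v)$ is a $\cA$-singleton, so \eqref{eq:Theorem6.21} gives
\[\mathcal{U}_{\cA}\circ\Pi_{\cA}(\PermMatrix{\UnfoldMap(v)})=o(f^A(\UnfoldMap(v))).\]
By Lemma~\ref{lem:UnfoldMap}\eqref{lem:UnfoldMap:itm:2}, the ideal $f^A(\UnfoldMap(v))$ is reflection-invariant; explicitly it equals $\alpha(I^v)$ by part~\eqref{lem:UnfoldMap:itm:0}. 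Its indicator vector is therefore one of the generators of $\aff(J(\HHA)^F)$, which completes the vertex case.

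The only step needing real care is the first: making sure $\Pi_{\cB}^{-1}$ is genuinely affine and that $\Pi_{\cB}(\aff(\cB))$ is the affine hull of the images of the vertices. Both follow from $\Pi_{\cB}$ being linear and injective on $\aff(\cB)$. Everything else is bookkeeping with results already established.
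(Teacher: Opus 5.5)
Your proposal is correct and follows essentially the same route as the paper: both compute the image of each vertex $\PermMatrix{\UnfoldMap(v)}$ via \eqref{eq:Theorem6.21} together with Lemma~\ref{lem:UnfoldMap}\eqref{lem:UnfoldMap:itm:1}--\eqref{lem:UnfoldMap:itm:2}, obtaining indicator vectors of reflection-invariant order ideals. The only difference is that you spell out the affine-extension step (that $\mathcal{U}_{\cA}\circ L$ is affine and that $\Pi_{\cB}(\aff(\cB))$ is the affine hull of the vertex images), which the paper leaves implicit in ``the claim follows.''
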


\begin{proof}
Given a $\cA$-singleton $w$, we have $\mathcal{U}_{\cA} \circ \Pi_{\cA} (X(w)) = o(f^A(w))$ by \eqref{eq:Theorem6.21}.
Given a $\cB$-singleton $v$, we know from Lemma~\ref{lem:UnfoldMap}\eqref{lem:UnfoldMap:itm:2} that the order ideal  $f^A(\UnfoldMap(v))$ is reflection-invariant. 
Therefore, applying $\mathcal{U}_{c^A} \circ L \circ \Pi_{\cB} = \mathcal{U}_{c^A} \circ \Pi_{\cA} \circ \iota$ to the set \[
\{X(\UnfoldMap(v)) \mid v \text{ is a $\cB$-singleton} \}\] 
produces a 
set of indicator vectors of the order ideals in $J(\HHA)^F$, and the claim follows.
\end{proof}

Now we will define the map $P$.  In \cite[Section~6.1]{cBirkhoffA}, we defined a specific linear extension, $\pi_A$, of $H^A$ coming from the construction of the ``diagonal reading word''. There is an induced linear extension on $H^B$, $\pi_B$, viewing $H^B$ as a subposet of $H^A$.  
Let $P:\mathbb{R}^{\binom{2n}{2}}\to\mathbb{R}^{n^2}$ be the linear map defined by $P(\mathbf{e}_i)=\mathbf{0}$ if  $\pi_A(i)$ is labeled $k < n$ and otherwise $P(\mathbf{e}_i) = \mathbf{e}_j$ if $\pi_A(i)$ and $\pi_B(j)$ are associated to the same poset element, conflating $H^B$ with the right side of $H^A$. From this description, we see $P$ is full-rank and lattice-preserving. 

\begin{example}
Recall that Figure~\ref{fig:longest element heap A7 and B4} showed $\HHA$ for $c=\rw{7145362}$ in $A_7$ and $\HHB$ for $c=\rw{3012}$ in $B_4$. 
The linear extension $\pi_A$ is given by the following permutation in two-line notation:
\[
\setcounter{MaxMatrixCols}{28}
\scalebox{0.74}{$\begin{pmatrix}
1 & 2 & 3 & 4 & 5 & 6 & 7 & 8 & 9 & 10 & 11 & 12 & 13 & 14 & 15 & 16 & 17 & 18 & 19 & 20  & 21 & 22 & 23 & 24 & 25 & 26 & 27 & 28\\
2&3&5&7&9&4&10&12&14&16&1&6&11&17&19&21&23&8&13&18&24&26&28&15&20&25&22&27
\end{pmatrix}$
}
\]
This induces the following linear extension $\pi_B$:
\[\setcounter{MaxMatrixCols}{16}
\scalebox{0.77}{$\begin{pmatrix}
1 & 2 & 3 & 4 & 5 & 6 & 7 & 8 & 9 & 10 & 11 & 12 & 13 & 14 & 15 & 16
\\
2 &
3 & 6 &
1 & 4 & 7 & 10 & 
5 & 8 & 11 & 14 &
9 & 12 & 15 & 
13 & 16
\end{pmatrix}$}
\]
We can use these maps to compute $P:\mathbb{R}^{28}\to\mathbb{R}^{16}$.  As an example, we will compute $P(\mathbf{e}_1)$ and $P(\mathbf{e}_2)$.
Since $\pi_A(1)=2$ and $2$ has label $1<4=n$, $P(\mathbf{e}_1)=\mathbf{0}$.  
To compute $P(\mathbf{e}_2)$, first notice that $\pi_A(2)=3$ and $3$ has label $4\not<4$. Since the element~3 in $\HHA$ corresponds to the element~2 in $\HHB$ and $\pi_B(1)=2$, we have $P(\mathbf{e}_2)=\mathbf{e}_1$.
\end{example}

Define $\mathcal{U}_{\cB} := P \circ \mathcal{U}_{\cA} \circ L$.

\begin{theorem}\label{thm:Main}
The map $\mathcal{U}_{\cB} \circ \Pi_{\cB}$ is a unimodular transformation such that, for all vertices $X(\UnfoldMap(v))$ of $\bir(\cB)$, we have $\mathcal{U}_{\cB} \circ \Pi_{\cB} ~ (X(\UnfoldMap(v))) = o(f^B(v))$.
In particular, $\bir(\cB)$ is integrally equivalent to $\ord(\HHB)$.
\end{theorem}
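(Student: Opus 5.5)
The plan is to verify the vertex identity first and then deduce unimodularity from it together with the lattice properties of the individual maps in diagram \eqref{eq:diagram_commutes}.

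\emph{Vertex identity.} Let $v$ be a $\cB$-singleton. By definition of $L$ we have $L\circ\Pi_{\cB}=\Pi_{\cA}\circ\iota$ on $\aff(\cB)$. Hence
\[
\mathcal{U}_{\cB}\circ\Pi_{\cB}(X(\UnfoldMap(v)))=P\circ\mathcal{U}_{\cA}\circ\Pi_{\cA}(X(\UnfoldMap(v))).
\]
By Lemma~\ref{lem:UnfoldMap}\eqref{lem:UnfoldMap:itm:1}, $\UnfoldMap(v)$ is a $\cA$-singleton. So \eqref{eq:Theorem6.21} turns the right-hand side into $P(o(f^A(\UnfoldMap(v))))$.

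Lemma~\ref{lem:UnfoldMap}\eqref{lem:UnfoldMap:itm:0}, combined with the fact that $f^A$ and $\MapPermutation^A$ are inverse to each other, gives $f^A(\UnfoldMap(v))=\alpha(I^v)$ with $I^v=f^B(v)$. By construction of $\alpha$, the right-side elements of $\alpha(I^v)$ correspond exactly to the elements of $I^v$. The map $P$ kills the coordinates indexed by left-side elements (labels $k<n$) and sends right-side coordinates to the matching coordinates of $\HHB$. Therefore $P(o(\alpha(I^v)))=o(I^v)=o(f^B(v))$.

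The one point to check carefully here is that $P$ treats the middle elements (label $n$) as right-side elements, so that $\HHB$ is identified with the elements labeled $\ge n$.

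\emph{Unimodularity.} Next I would show that $\mathcal{U}_{\cB}\circ\Pi_{\cB}$ restricts to an affine isomorphism from $\aff(\cB)$ onto $\mathbb{R}^{n^2}$ that maps the lattice $\aff(\cB)\cap\mathbb{Z}^{(2n)^2}$ bijectively onto $\mathbb{Z}^{n^2}$.

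Integrality of the forward map is immediate. Proposition~\ref{prop:PicbIsInjective} and the construction of $\Pi_{\cA}$ in \cite{cBirkhoffA} show that $\Pi_{\cA}\circ\iota$ preserves integrality, $\mathcal{U}_{\cA}$ is unimodular, and $P$ is a coordinate projection.

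For surjectivity, the image contains the indicator vectors $o(I)$ for all $I\in J(\HHB)$, because $f^B$ is a bijection onto $J(\HHB)$ by Proposition~\ref{prop:poset isomorphism}. These vectors include $\mathbf{0}$ and affinely span $\mathbb{R}^{n^2}$. In fact they generate $\mathbb{Z}^{n^2}$ affinely: adding elements one at a time along a linear extension gives a chain of ideals whose consecutive differences are the standard basis vectors.

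For injectivity on $\aff(\cB)$, I would use that $\Pi_{\cB}$ is injective (Proposition~\ref{prop:PicbIsInjective}) and that $\mathcal{U}_{\cA}\circ L$ is injective. By Lemma~\ref{lem:ImageOfUL}, the image of $\mathcal{U}_{\cA}\circ L$ lies in $\aff(J(\HHA)^F)$. On that space $P$ is injective, since a reflection-invariant vector is determined by its right-side coordinates together with the middle ones. So the composite is an affine injection whose image is all of $\mathbb{R}^{n^2}$, hence a bijection.

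\emph{Main obstacle: integral inverse.} The hardest step is showing the inverse sends $\mathbb{Z}^{n^2}$ into integral matrices. I would argue this via the vertices, reversing the surjectivity argument. The standard basis vectors $\mathbf{e}_j$ are differences $o(I')-o(I)$ of ideals differing by one element. Their preimages are therefore differences $X(\UnfoldMap(v'))-X(\UnfoldMap(v))$ of permutation matrices, which are integral. Hence the inverse maps the lattice generated by $\mathbf{0}$ and the $\mathbf{e}_j$ into integral points.

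Conversely, an integral point of $\aff(\cB)$ maps to an integral point by the forward integrality above. So the lattices correspond bijectively and the map is unimodular.

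\emph{Conclusion.} The map carries the vertex set of $\bir(\cB)$ bijectively onto $\{o(I): I\in J(\HHB)\}$, which is the vertex set of $\ord(\HHB)$. Taking convex hulls gives the integral equivalence $\bir(\cB)\cong\ord(\HHB)$.
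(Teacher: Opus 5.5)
Your proposal is correct. The vertex identity and the injectivity argument match the paper's proof. You factor the map as $P\circ\mathcal{U}_{\cA}\circ\Pi_{\cA}\circ\iota$, apply \eqref{eq:Theorem6.21} and Lemma~\ref{lem:UnfoldMap}, and use that $P$ is injective on $\aff(J(\HHA)^F)$, which contains the image of $\mathcal{U}_{\cA}\circ L$ by Lemma~\ref{lem:ImageOfUL}.

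You differ from the paper in the lattice step. The paper only says the composite is lattice-preserving because $P$, $\mathcal{U}_{\cA}$, $\Pi_{\cA}$ and $\iota$ are. Read literally, this gives that integral points of $\aff(\cB)$ map into $\mathbb{Z}^{n^2}$, and leaves the reverse inclusion implicit. Deriving the reverse inclusion from the factors would also need two facts:
\begin{itemize}
\item $\mathcal{U}_{\cA}\circ\Pi_{\cA}$ is a lattice bijection on $\aff(\cA)$ that carries $\aff(\cB)$ onto $\aff(J(\HHA)^F)$;
\item $P$ restricts to a lattice bijection on that subspace.
\end{itemize}

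You instead get surjectivity onto $\mathbb{R}^{n^2}$ and integrality of the inverse directly from the vertex identity. The points $o(I)$ affinely generate $\mathbb{Z}^{n^2}$, since $\mathbf{0}=o(\emptyset)$ and each $\mathbf{e}_j$ is $o(I')-o(I)$ for two ideals differing in one element. Their preimages are the integer matrices $\PermMatrix{\UnfoldMap(v)}$. So the inverse affine map is integral: it sends $\mathbf{0}$ to the identity matrix, and its linear part sends each $\mathbf{e}_j$ to a difference of two such permutation matrices. This is the precise form of your phrase ``their preimages are differences''.

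Your route is self-contained, makes the bijection of lattices explicit, and gives $\dim\aff(\cB)=n^2$ as a byproduct. The paper's route is shorter but relies more heavily on the type $A$ results of \cite{cBirkhoffA}.
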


\begin{proof}
From the commutative diagram \eqref{eq:diagram_commutes}, we see that $\mathcal{U}_{\cB} \circ \Pi_{\cB}  = P \circ \mathcal{U}_{\cA} \circ \Pi_{\cA} \circ \iota$. 
Thus, by \eqref{eq:Theorem6.21}, we have $\mathcal{U}_{\cB} \circ \Pi_{\cB} ~ (X(\UnfoldMap(v))) = P(o(f^A(\UnfoldMap(v))))$. 
By the definition of $P$,
we conclude $P(o(f^A(\UnfoldMap(v)))) = o(f^B(v))$, and thus $\mathcal{U}_{\cB} \circ \Pi_{\cB} ~ (X(\UnfoldMap(v))) = o(f^B(v))$, as desired.

The maps $L$ and $\mathcal{U}_{\cA}$ are injective.  Since $P$ preserves information from the right side of $\HHA$ and kills information from the left side of $\HHA$, $P$ is injective on 
$\aff(J(\HHA)^F)$.  
The image of $\mathcal{U}_{\cA} \circ L$ is contained in $\aff(J(\HHA)^F)$ by Lemma~\ref{lem:ImageOfUL}, so $P \circ \mathcal{U}_{\cA} \circ L=\mathcal{U}_{\cB}$ is injective.

Since $\Pi_{\cB}$ is injective on $\aff(\cB)$, 
the composition $\mathcal{U}_{\cB} \circ \Pi_{\cB}$ is also injective on $\aff(\cB)$. 
Furthermore, since $P$, $\mathcal{U}_{\cA}$, $\Pi_{\cA}$, and $\iota$ are lattice-preserving, we have that $\mathcal{U}_{\cB} \circ \Pi_{\cB}$ is also lattice-preserving. 
Therefore, $\mathcal{U}_{\cB} \circ \Pi_{\cB}$ is a unimodular transformation.
\end{proof}

\subsection*{Acknowledgements}
Banaian was supported by the German Research Foundation SFB-TRR 358/1 2023 – 491392403. Gunawan was supported by the Travel Support for Mathematicians gift SFI-MPS-TSM-00013520 from The Simons Foundation. 
Part of this work took place during the  ``Computation in Representation Theory" workshop at ICERM in November 2025. This research benefited from the open-source software {\sc SageMath}.

\printbibliography
\end{document}